\DeclareMathOperator*{\argmin}{arg\,min}
\newcommand{\R}{{\mathbb{R}}}
\newcommand{\I}{{\mathbb{I}}}
\newcommand{\Su}{{\mathcal{S}}}
\newcommand{\loss}{{\mathcal{L}}}
\newcommand{\regu}{{\mathcal{P}}}
\newcommand{\obj}{\Psi}
\newcommand{\Cc}{{\mathcal{C}}}
\newcommand{\Aa}{{\mathbb{A}}}
\newcommand{\Bb}{{\mathbb{B}}}
\newcommand{\X}{{\mathscr{X}}}
\newcommand{\N}{{\mathcal{N}}}
\newcommand{\XX}{{\mathfrak{X}}}
\newcommand{\Z}{{\mathfrak{Z}}}
\newcommand{\Q}{{\mathcal{Q}}}
\newcommand{\normm}[1]{{\left\vert\kern-0.25ex\left\vert\kern-0.25ex\left\vert #1
\right\vert\kern-0.25ex\right\vert\kern-0.25ex\right\vert}}
\newcommand{\inm}[2]{\langle\langle #1,#2 \rangle\rangle}
\newtheorem{Assumption}{Assumption}
\newtheorem{Lemma}{Lemma}
\newtheorem{Definition}{Definition}
\newtheorem{Proposition}{Proposition}
\newtheorem{Remark}{Remark}
\newtheorem{Theorem}{Theorem}
\newtheorem{Corollary}{Corollary}
\title{Low-rank matrix recovery via nonconvex optimization methods with application to errors-in-variables matrix regression}
\author[1]{Xin Li}
\author[2]{Dongya Wu}
\affil[1]{School of Mathematics, Northwest University, Xi’an, 710069, P. R. China}
\affil[2]{School of Information Science and Technology, Northwest University, Xi’an, 710069, P. R. China}
\date{}
\begin{document}
\maketitle

\begin{abstract}
We consider the nonconvex regularized method for low-rank matrix recovery. Under the assumption on the singular values of the parameter matrix, we provide the recovery bound for any stationary point of the nonconvex method by virtue of regularity conditions on the nonconvex loss function and the regularizer. This recovery bound can be much tighter than that of the convex nuclear norm regularized method when some of the singular values are larger than a threshold defined by the nonconvex regularizer. In addition, we consider the errors-in-variables matrix regression as an application of the nonconvex optimization method. Probabilistic consequences and the advantage of the nonoconvex method are demonstrated through verifying the regularity conditions for specific models with additive noise and missing data.
\end{abstract}

{\bf Keywords:} Nonconvex optimization; Errors-in-variables matrix regression; Nonconvex regularization; Recovery bounds

\section{Introduction}\label{sec-intro}

We consider the low-rank matrix recovery problem, which appears in many applications such as image processing and collaborative filtering. Specifically, one aims to recover an unknown low-rank matrix $\Theta^*\in \R^{d_1\times d_2}$ with $\text{rank}(\Theta^*)\leq r\ll \min\{d_1,d_2\}$ from some measurements. A natural idea is to solve the following rank-constrained minimization problem
\begin{equation}\label{eq-rankmin}
\min_{\Theta\in \R^{d_1\times d_2}} \loss_N(\Theta)\quad \mbox{s.t.} \quad \text{rank}(\Theta)\leq r,
\end{equation}
where $\loss_N:\R^{d_1\times d_2}\rightarrow \R$ is usually a smooth and convex loss function measuring data fitting. Unfortunately, due to the nonconvex and combinational natures of \eqref{eq-rankmin}, it is NP-hard to obtain a global solution \cite{natarajan1995sparse}. Researchers have then devoted to seek for convex relaxations of \eqref{eq-rankmin}. A popular way is to use the convex nuclear norm $\normm{\cdot}_*$, which is the sum of all singular values of a matrix, as a surrogate for the rank function, and two optimization problems have been proposed \cite{Fazel2001ARM}. The first one is a constrained minimization problem
\begin{equation}\label{eq-nuclearmin}
\min_{\Theta\in \R^{d_1\times d_2}} \loss_N(\Theta)\quad \mbox{s.t.} \quad \normm{\Theta}_*\leq \zeta,
\end{equation}
where $\zeta$ is a tuning parameter,
and the second one is a regularization problem 
\begin{equation}\label{eq-nuclearregu}
\min_{\Theta\in \R^{d_1\times d_2}} \loss_N(\Theta)+\lambda\normm{\Theta}_*,
\end{equation}
where $\lambda$ is a regularization parameter. Both of problems \eqref{eq-nuclearmin} and \eqref{eq-nuclearregu} enjoy the benefits of convex optimization and can be solved in polynomial time by a number of numerical algorithms \cite{cai2010singular,Becker2011Templates,Lee2010ADMiRA}. In the theoretical aspect, global recovery bounds, which measure the distance between the global solutions and the true low-rank parameter $\Theta^*$, have also been established for \eqref{eq-nuclearmin} and \eqref{eq-nuclearregu} under suitable regularity conditions such as the restricted isometry property (RIP) and the restricted strong convexity (RSC)\cite{candes2011tight,negahban2011estimation,recht2010guaranteed}.

On the other hand, it is easy to see that the nuclear norm relaxation methods \eqref{eq-nuclearmin} and \eqref{eq-nuclearregu}, which penalize the singular values of a matrix, is a generalization of the $\ell_1$ norm penalized method in sparse linear regression. Hence the deficiency of the latter one has also been inherited that significant estimation bias is induced since larger and more informative singular values are penalized \cite{wang2014optimal,zhang2008sparsity}. In view of the advantages of nonconvex regularizers such as the smoothly clipped absolute deviation penalty (SCAD) \cite{fan2001variable} and the minimax concave penalty (MCP) \cite{Zhang2010} that they are able to achieve more refined recovery accuracy and variable selection consistency in linear regression \cite{wang2014optimal},  researchers have paid increasing attention to nonconvex regularizers imposed on the singular values of a matrix to perform low-rank approximation \cite{Mohan2012Iterative}. Some popular instances of nonconvex regularizers include the Schatten $\ell_p\ (0<p<1)$ norm \cite{rohde2011estimation}, the truncated nuclear norm \cite{Hu2013Fast}, the SCAD and MCP \cite{zhou2014regularized,Lu2014GeneralizedNN,gui2015towards}. Meanwhile, the loss function can also be nonconvex in real applications, such as error-in-variables matrix regression; see \cite{carroll2006measurement,Li2024LowrankME} and references therein.

Although empirical results have shown the superiority of nonconvex regularizers over the convex nuclear norm and extensive studies have been made on optimization methods to solve nonconvex regularized problems \cite{wang2021nonconvex,Hu2013Fast,yao2015fast,yao2017large}, little is unknown about the theoretical properties of nonconvex regularizes for low-rank matrix recovery except the work \cite{gui2015towards}.\cite{gui2015towards} presented a unified framework for low-rank matrix recovery with nonconvex regularizers, which satisfies some curvature and dominant property to control the nonconvexity level. The resulting nonconvex estimator is shown to enjoy a faster statistical convergence rate than that of the convex nuclear norm regularized estimator.

However, \cite{gui2015towards} only established the recovery bound for the global solution of the nonconvex regularized estimator, while the nonconvex optimization objective functions may have many local optima that are not global optima, a fact which might result in that algorithms such as gradient descent may attain undesired local optima. This weakness leads to a considerable gap between theory and practice. Moreover, empirical studies have also pointed out that local optima of a class of  nonconvex estimators arising in statistical inference problems possess good recovery performance \cite{breheny2011coordinate}. Therefore, it is necessary to analyse recovery bounds of local solutions in nonconvex low-rank matrix estimation problems.

The main purpose of this paper is to deal with a more general case where the loss function and the regularizer can both be nonconvex. The main contributions of this paper are as follows. Under the assumption on singular values of the true parameter matrix, we prove the recovery bound for any stationary point of the nonconvex optimization problem under suitable forms of the RSC condition. When some of the singular values are larger than a threshold defined by the nonconvex regularizer, this recovery bound is much tighter than that of the convex nuclear norm regularized method; see Theorem \ref{thm-stat}. This theoretical result is of practical importance since it is does not rely on any specific numerical algorithms, suggesting that any algorithm can consistently recover the true low-rank matrix as long as it converges to a stationary point.

Furthermore, in the aspect of application, high-dimensional error-in-variables matrix regression is considered as an instance of the nonconvex method. Measurement errors cannot be avoided in practice due to instrumental or economical constraints, and thus the collected data, such as genetic data, may always be noisy or partially missing. Worse still, methods for clean data cannot be naively applied otherwise, only misleading inference results can be obtained \cite{sorensen2015measurement}. There have been some results on errors-in-variables linear regression \cite{loh2012high,datta2017cocolasso,rosenbaum2010sparse,li2020sparse}, while for low-rank matrix recovery, only the particular case — multi-response models are considered with the convex nuclear norm as the regularizer \cite{wu2020scalable,li2023Lowrank}. However, little attention paid on the more general and widely-used errors-in-variables matrix regression model, which is a generic and unified observation model including many different models such as the multi-response model, matrix completion, matrix compressed sensing and so on.
As we have mentioned before, the loss function is generally nonconvex in errors-in-variables regression, and thus the parameter estimation problem falls into the framework of this paper. Hence the proposed nonconvex regularized method is applied on the errors-in-variables matrix regression model via verifying regularity conditions; see Corollaries \ref{corol-add} and \ref{corol-mis}.

The remainder of this article is organized as follows. In Sect. \ref{sec-prob}, we propose a general nonconvex  estimator based on nonconvex spectral regularization. Some regularity conditions are imposed on the loss function for further analysis. In Sect. \ref{sec-main}, we establish our main results on statistical recovery bounds. In Sect. \ref{sec-conse}, probabilistic consequences on the regularity conditions for errors-in-variables matrix models are obtained. Conclusions and future work are discussed in Sect. \ref{sec-con}. 

We end this section by introducing useful notations. For a vector $\beta\in \R^d$ and an index set $J\subseteq \{1,2,\dots,d\}$, we use $\beta_J$ to denote the vector in which $(\beta_J)_i=\beta_i$ for $i\in J$ and zero elsewhere, $\text{mat}(\beta)\in \R^{d\times d}$ to denote the diagonal matrix with diagonal elements equal to $\beta_i\ (i=1,2,\cdots,d)$, $|J|$ to denote the cardinality of $J$, and $J^c=\{1,2,\dots,d\}\setminus J$ to denote the complement of $J$. For $d\geq 1$, let $\I_d$ stand for the $d\times d$ identity matrix. For a matrix $X\in \R^{d_1\times d_2}$, let $X_{ij}\ (i=1,\dots,d_1,j=1,2,\cdots,d_2)$ denote its $ij$-th entry, $X_{i\cdot}\ (i=1,\dots,d_1)$ denote its $i$-th row, $X_{\cdot j}\ (j=1,2,\cdots,d_2)$ denote its $j$-th column, and vec$(X)\in \R^{d_1d_2}$ to denote its vectorized form. When $X$ is a square matrix, i.e., $d_1=d_2$, we use diag$(X)$ stand for the diagonal matrix with its diagonal elements equal to $X_{11},X_{22},\cdots,X_{d_1d_1}$. We write $\lambda_{\text{min}}(X)$ and $\lambda_{\text{max}}(X)$ to denote the minimal and maximum eigenvalues of a matrix $X$, respectively. For a matrix $\Theta\in \R^{d_1\times d_2}$, define $d=\min\{d_1,d_2\}$, $\tilde{d}=\max\{d_1,d_2\}$, and denote its singular values in decreasing order by $\sigma_1(\Theta)\geq \sigma_2(\Theta)\geq \cdots \sigma_d(\Theta)\geq 0$. We use $\normm{\cdot}$ to denote different types of matrix norms based on singular values, including the nuclear norm $\normm{\Theta}_*=\sum_{j=1}^{d}\sigma_j(\Theta)$, the spectral or operator norm $\normm{\Theta}_{\text{op}}=\sigma_1(\Theta)$, and the Frobenius norm $\normm{\Theta}_\text{F}=\sqrt{\text{trace}(\Theta^\top\Theta)}=\sqrt{\sum_{j=1}^{d}\sigma_j^2(\Theta)}$. For a pair of matrices $\Theta$ and $\Gamma$ with equal dimensions, we let $\inm{\Theta}{\Gamma}=\text{trace}(\Theta^\top \Gamma)$ denote the trace inner product on matrix space. For a function $f:\R^d\to \R$, $\nabla f$ is used to denote the gradient when $f$ is differentiable, and $\partial f$ is used to denote the subdifferential that consists of all subgradients when $f$ is nondifferentiable but convex.

\section{Problem setup}\label{sec-prob}

In this section, we propose a general nonconvex estimator to estimate a low-rank matrix, and then impose some regularity conditions on the nonconvex loss function and the nonconvex regularizer.

\subsection{General nonconvex estimator}

In this article, we mainly consider the high-dimensional scenario where the number of unknown matrix entries $d_1\times d_2$ can be much larger than the number of observations $N$. Researchers have already pointed out that consistent estimation cannot be achieved under this high-dimensional setting unless the model space is imposed with additional structures, such as low-rankness in matrix estimation problems \cite{negahban2012unified}. Empirical facts have also shown that low-rank matrices always arise in real applications, such as multi-task learning \cite{wu2019joint} and collaborative filtering \cite{sagan2021lowrank}. In the following, we shall impose the low-rank constraint on the parameter space. 

For a matrix $\Theta\in \R^{d_1\times d_2}$, let $d=\min\{d_1,d_2\}$ and $\sigma(\Theta)$ stand for the vector formed by the singular values of $\Theta$ in decreasing order, i.e., $\sigma(\Theta)=(\sigma_1(\Theta),\sigma_2(\Theta),\cdots,\sigma_d(\Theta))\top$ and $\sigma_1(\Theta)\geq \sigma_2(\Theta)\geq \cdots \sigma_{d}(\Theta)$. The true parameter $\Theta^*\in \R^{d_1\times d_2}$ is assumed to be of low-rank with 
\begin{equation}\label{eq-rank}
\text{rank}(\Theta^*)=r\ll \min\{d_1,d_2\}.
\end{equation} 
This low-rank assumption implies that there are only $r$ nonzero elements in the singular values of the true parameter $\Theta^*$.

Consider the following regularized $M$-estimator
\begin{equation}\label{eq-esti}
\hat{\Theta} \in \argmin_{\Theta\in \Omega\subseteq \R^{d_1\times d_2}}\{\loss_N(\Theta)+\regu_\lambda(\Theta)\},
\end{equation}
where $\loss_N:\R^{d_1\times d_2}\to \R$ is a loss function, $\lambda>0$ is a regularization parameter providing a tradeoff between model fitting and low-rankness, and $\regu_\lambda:\R^{d_1\times d_2}\to \R$ is a regularizer depending on $\lambda$ and imposing low-rankness of $\hat{\Theta}$.

In the following, both the loss function $\loss_N$ and the regularizer $\regu_\lambda$ can be nonconvex. We only require the differentiability $\loss_N$. Due to the nonconvexity, the feasible region is set to be a convex set as follows
\begin{equation}\label{eq-feasi}
\Omega:=\{\Theta\in \R^{d_1\times d_2}\big| \normm{\Theta}_*\leq \omega\},
\end{equation}
where $\omega>0$ must be chosen to guarantee that $\Theta^*$ is feasible, i.e., $\Theta^*\in \Omega$. Any matrix $\Theta\in \Omega$ also satisfies the side constraint $\|\Theta\|_*\leq \omega$. Then Weierstrass extreme value theorem ensures that global solutions of \eqref{eq-esti} always exist as long as the loss function and the regularizer are both continuous.  

The regularizer is set as a sum of a univariate function imposed on the singular value of a matrix, i.e., $\regu_\lambda(\cdot)=\sum_{j=1}^{d}p_\lambda(\sigma_j(\cdot))$, with $p_\lambda:\R \to \R$. Moreover, we assume that the univariate function $p_\lambda(\cdot)$ can be decomposed as $p_\lambda(\cdot)=q_\lambda(\cdot)+\lambda|\cdot|$, where $q_\lambda(\cdot)$ is a concave function and $|\cdot|$ is the absolute value function. Hence, it is easy to see that the regularizer can be decomposed as $\regu_\lambda(\cdot)=\Q_\lambda(\cdot)+\lambda\normm{\cdot}_*$, where $\Q_\lambda(\cdot)$ is the concave component defined by 
\begin{equation}\label{eq-qlambda}
\Q_\lambda(\cdot)=\sum_{j=1}^{d}q_\lambda(\sigma_j(\cdot)),
\end{equation} 
and $\normm{\cdot}_*$ is the nuclear norm function. 

\subsection{Regularity conditions}

In order to bound recovery errors for low-rank matrix estimation problems, researchers have introduced several types of regularity conditions, among which the RSC is one of the weakest conditions, and has been shown to be satisfied by a wide range of random matrices with overwhelming probability when the covariates are clean \cite{agarwal2012fast,negahban2011estimation}. 

However, it is still an open question whether or not a suitable form of RSC exists for errors-in-variables matrix regression. In this article, we provide a positive answer for this question by proposing a general type of RSC condition and verifying the condition for specific measurement error models under high-dimensional scaling. Some notations are needed first.

For an arbitrary matrix $\Delta\in \R^{d_1\times d_2}$, write the singular value decomposition as $\Delta=UDV^\top$, where $U\in \R^{d_1\times d}$ and $V^*\in \R^{d_2\times d}$ are orthonormal matrices with $d=\min\{d_1,d_2\}$. Let $\sigma(\Delta)$ be the vector formed by the singular values of $\Delta$ in decreasing order, and $T=\{1,2,\cdots,2r\}$ be an index set with $r=\text{rank}(\Theta^*)$. 
Define 
\begin{equation}\label{eq-thetaj}
\Delta_T=U\text{mat}(\sigma_T(\Delta))V^\top\quad  \mbox{and}\quad \Delta_{T^c}=U\text{mat}(\sigma_{T^c}(\Delta))V^\top,
\end{equation}
where $\text{mat}(\sigma_T(\Delta))$ represents the diagonal matrix with diagonal elements formed by the vector $\sigma_T(\Delta)$ and $\text{mat}(\sigma_T^c(\Delta))$ is given analogously. 
Then it is easy to see that $\Delta=\Delta_T+\Delta_{T^c}$ and $\inm{\Delta_T}{\Delta_{T^c}}=0$.

Define the cone set as  
\begin{equation}\label{eq-cone}
\Cc=\left\{\Delta\in \R^{d_1\times d_2}\big|\normm{\Delta_{T^c}}_*\leq 7\normm{\Delta_T}_*\right\}.
\end{equation}
Then the regularity conditions take two types of forms. One is the local strong convexity of the loss function around the true parameter $\Theta^*$; the other one is the restricted strong convexity of the loss function in a restricted cone set.

\begin{Definition}\label{asup-rsc1}
The function $\loss_N$ is said to satisfy the local strong convexity (\emph{LSC}) with parameters $\alpha_1>0$ and $\tau_1>0$ if
\begin{equation}\label{eq-rsc1}
\inm{\nabla\loss_N(\Theta^*+\Delta)-\nabla\loss_N(\Theta^*)}{\Delta}\geq \alpha_1\normm{\Delta}_\text{F}^2-\tau_1\normm{\Delta}_*^2,\quad \forall\  \Delta\in \R^{d_1\times d_2}.
\end{equation}
\end{Definition}

\begin{Definition}\label{asup-rsc2}
The function $\loss_N$ is said to satisfy the restricted strong convexity (\emph{RSC}) with parameters $\alpha_2>0$ if
\begin{equation}\label{eq-rsc2}
\loss_N(\Theta+\Delta)-\loss_N(\Theta)-\inm{\nabla\loss_N(\Theta)}{\Delta}\geq \alpha_2\normm{\Delta}_\text{F}^2,\quad \forall\  \Delta\in \Cc.
\end{equation}
\end{Definition}

Now we impose several regularity conditions on the nonconvex regularizer $\regu_\lambda$ in terms of the univariate functions $p_\lambda$ and $q_\lambda$.

\begin{Assumption}\mbox{}\par\label{asup-regu}
\begin{enumerate}[\rm(i)]
\item $p_\lambda$ satisfies $p_\lambda(0)=0$ and is symmetric around zero, that is, $p_\lambda(t)=p_\lambda(-t)$ for all $t\in \R$.
\item For $t>0$, the function $t\mapsto \frac{p_\lambda(t)}{t}$ is nonincreasing in $t$;
\item $p_\lambda$ is differentiable for all $t\neq 0$ and subdifferentiable at $t=0$, with $\lim\limits_{t\to 0^+}p'_\lambda(t)=\lambda$.
\item On the nonnegative real line, $p_\lambda$ is nondecreasing and concave, and there exists a constant $\nu$ that $p_\lambda$ satisfies $p'_\lambda(t)=0$ for all $t\geq \nu>0$.
\item For $t>t'$, there exists a positive constant $\mu\geq 0$ such that
    \begin{equation}\label{cond-qlambda}
    q'_\lambda(t)-q'_\lambda(t')\geq -\mu(t-t').
    \end{equation}
\item On the nonnegative real line, $q_\lambda$ is decreasing and $|q_\lambda'(t)|$ is upper bounded by $\lambda$, that is, $|q_\lambda'(t)|\leq \lambda$.
\end{enumerate}
\end{Assumption}
Note that condition (ii) implies that on the nonnegative line, the function $p_\lambda$ is subadditive. It is easy to check that the convex nuclear norm does not satisfy Assumption \ref{asup-regu} due to a violation of condition (iv). In fact, condition (iv) plays a key role in achieving a tighter recovery bound of nonconvex regularizers than that of the convex nuclear norm. Nonetheless, nonconvex regularizers such as SCAD and MCP are contained in this framework. 

Fixing $a>2$ and $b>0$, the function $p_\lambda$ for the SCAD regularizer is defined as 
\begin{equation*}
p_\lambda(t):=\left\{
\begin{array}{l}
\lambda|t|,\ \ \text{if}\ \  |t|\leq \lambda,\\
-\frac{t^2-2a\lambda|t|+\lambda^2}{2(a-1)},\ \  \text{if}\ \ \lambda<|t|\leq a\lambda,\\
\frac{(a+1)\lambda^2}{2},\ \ \text{if}\ \ |t|>a\lambda,
\end{array}
\right.
\end{equation*}
and the function $p_\lambda$ for the MCP regularizer is defined as
\begin{equation*}
p_\lambda(t):=\left\{
\begin{array}{l}
\lambda|t|-\frac{t^2}{2b},\ \ \text{if}\ \  |t|\leq b\lambda,\\
\frac{b\lambda^2}{2},\ \ \text{if}\ \ |t|>b\lambda.
\end{array}
\right.
\end{equation*}
It has been verified in \cite{gui2015towards} that the SCAD regularizer satisfies condition (iv) with $\nu=a\lambda$; while for MCP, condition (iv) is satisfied with $\nu=b\lambda$.
The two concave components are 
\begin{equation}\label{SCAD-q-2}
q_\lambda(t)=\left\{
\begin{array}{l}
0,\ \ \text{if}\ \  |t|\leq \lambda,\\
-\frac{t^2-2\lambda|t|+\lambda^2}{(2(a-1)},\ \  \text{if}\ \ \lambda<|t|\leq a\lambda,\\
\frac{(a+1)\lambda^2}{2}-\lambda|t|,\ \ \text{if}\ \ |t|>a\lambda,
\end{array}
\right.
\end{equation}
for SCAD with $\mu=\frac{1}{a-1}$, and
\begin{equation}\label{MCP-q-2}
q_\lambda(t)=\left\{
\begin{array}{l}
-\frac{t^2}{2b},\ \ \text{if}\ \  |t|\leq b\lambda,\\
\frac{b\lambda^2}{2}-\lambda|t|,\ \  \text{if}\ \ |t|> b\lambda,
\end{array}
\right.
\end{equation}
for MCP with $\mu=\frac{1}{b}$, respectively, for condition (v). 

At the end of this section, three technical lemmas are provided, showing some general properties of the nonconvex regularizer $\regu_\lambda$ and the concave component $\Q_\lambda$. The first lemma is from \cite[Theorem 1]{Rotfeld1967RemarksOT} and \cite[Theorem 1]{Yue2016API} about singular value inequalities with the proof omitted.

\begin{Lemma}\label{lem-tri}
Let $\Theta,\Theta'\in \R^{d_1\times d_2}$ be two given matrices and $d=\min\{d_1,d_2\}$. Let $f:\R_+\to \R_+$ be a concave increasing function satisfying $f(0)=0$. Then it holds that
\begin{align}
\sum_{j=1}^{d}f(\sigma_j(\Theta+\Theta'))&\leq \sum_{j=1}^{d}f(\sigma_j(\Theta))+\sum_{j=1}^{d}f(\sigma_j(\Theta')),\label{eq-tri+}\\
\sum_{j=1}^{d}f(\sigma_j(\Theta-\Theta'))&\geq \sum_{j=1}^{d}f(\sigma_j(\Theta))-\sum_{j=1}^{d}f(\sigma_j(\Theta')).\label{eq-tri-}
\end{align}
\end{Lemma}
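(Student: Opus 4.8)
Since the paper defers this lemma to \cite{Rotfeld1967RemarksOT,Yue2016API}, let me outline how I would prove it directly. The plan is to reduce both inequalities to a single statement: the functional $\Phi_f(X):=\sum_{j=1}^{d}f(\sigma_j(X))$ is \emph{subadditive}, i.e. $\Phi_f(\Theta+\Theta')\le\Phi_f(\Theta)+\Phi_f(\Theta')$ for all $\Theta,\Theta'\in\R^{d_1\times d_2}$. Given subadditivity, \eqref{eq-tri+} is exactly this inequality, while \eqref{eq-tri-} follows by applying it to the splitting $\Theta=(\Theta-\Theta')+\Theta'$, which gives $\Phi_f(\Theta)\le\Phi_f(\Theta-\Theta')+\Phi_f(\Theta')$, and then rearranging.

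To prove subadditivity of $\Phi_f$, the plan is to peel $f$ into two elementary building blocks through an integral representation. Since $f$ is concave, nondecreasing and $f(0)=0$ on $\R_+$, its right derivative $f'$ is nonincreasing and nonnegative; setting $a:=\lim_{t\to\infty}f'(t)\ge 0$, the function $f'-a$ decreases to $0$, so there is a nonnegative measure $\mu$ on $(0,\infty)$ with $f(t)=at+\int_{(0,\infty)}\min(t,w)\,d\mu(w)$ (write $f(t)=\int_0^t f'(s)\,ds$ and apply Fubini). Summing over singular values and exchanging the finite sum with the integral,
\[
\Phi_f(X)=a\normm{X}_*+\int_{(0,\infty)}\phi_w(X)\,d\mu(w),\qquad \phi_w(X):=\sum_{j=1}^{d}\min(\sigma_j(X),w).
\]
Thus it suffices to show that $X\mapsto\normm{X}_*$ and $X\mapsto\phi_w(X)$ (for each fixed $w>0$) are subadditive; the former is just the triangle inequality for the nuclear norm.

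For the truncation functional $\phi_w$, the key step is the variational identity
\[
\phi_w(X)=\min_{Z\in\R^{d_1\times d_2}}\bigl\{\,w\,\text{rank}(Z)+\normm{X-Z}_*\,\bigr\},
\]
which follows from Mirsky's theorem: for each integer $k$ the truncated SVD minimizes $\normm{X-Z}_*$ over $\text{rank}(Z)\le k$, with minimal value $\sum_{j>k}\sigma_j(X)$, and $\min_{0\le k\le d}\bigl\{wk+\sum_{j>k}\sigma_j(X)\bigr\}=\sum_{j=1}^{d}\min(\sigma_j(X),w)$. Now, for $\Theta,\Theta'$ pick optimal $Z,Z'$ in the identity above; then $Z+Z'$ is feasible for $\Theta+\Theta'$, with $\text{rank}(Z+Z')\le\text{rank}(Z)+\text{rank}(Z')$ and $\normm{(\Theta+\Theta')-(Z+Z')}_*\le\normm{\Theta-Z}_*+\normm{\Theta'-Z'}_*$, which yields $\phi_w(\Theta+\Theta')\le\phi_w(\Theta)+\phi_w(\Theta')$. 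Combining this with the two displays above completes the proof.

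The step I expect to be the main obstacle is $\phi_w$. Subadditivity of $\sum_j\min(\sigma_j(\cdot),w)$ does \emph{not} drop out of Ky Fan's inequalities $\sum_{j\le k}\sigma_j(\Theta+\Theta')\le\sum_{j\le k}\sigma_j(\Theta)+\sum_{j\le k}\sigma_j(\Theta')$ alone: the naive counting argument, using that the number of singular values of $\Theta+\Theta'$ exceeding $w$ is controlled via Weyl's bound $\sigma_{p+q+1}(\Theta+\Theta')\le\sigma_{p+1}(\Theta)+\sigma_{q+1}(\Theta')$, only produces a bound in terms of $\min(\sigma_j,2w)$ and thus loses a spurious factor of $2$. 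It is precisely the low-rank-approximation reformulation of $\phi_w$ that eliminates this slack. (Alternatively, one may simply invoke the singular value inequalities of \cite{Rotfeld1967RemarksOT,Yue2016API}, as is done in the paper.)
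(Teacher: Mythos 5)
Your proof is correct, and it is genuinely different from what the paper does: the paper simply cites \cite[Theorem 1]{Rotfeld1967RemarksOT} and \cite[Theorem 1]{Yue2016API} and omits the argument, whereas you give a self-contained derivation. Your reduction of \eqref{eq-tri-} to subadditivity via the splitting $\Theta=(\Theta-\Theta')+\Theta'$ is exactly right, the integral representation $f(t)=at+\int_{(0,\infty)}\min(t,w)\,d\mu(w)$ is the standard decomposition of a concave nondecreasing function vanishing at the origin, and the key step --- the infimal-convolution identity $\phi_w(X)=\min_{Z}\{w\,\text{rank}(Z)+\normm{X-Z}_*\}$, verified through Mirsky's theorem and the observation that $k\mapsto wk+\sum_{j>k}\sigma_j(X)$ is minimized at $k^*=\#\{j:\sigma_j(X)>w\}$ --- correctly delivers subadditivity of $\phi_w$ from subadditivity of rank and of the nuclear norm. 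Your closing remark about why Ky Fan/Weyl counting alone loses a factor of $2$ is a fair diagnosis of where the naive route fails. What your approach buys is transparency and a reusable mechanism (the variational characterization of the truncated nuclear functional is of independent interest, e.g.\ for the truncated nuclear norm regularizer mentioned in the introduction); what the paper's citation buys is brevity and coverage of the general Rotfel'd-type statement without hypotheses on $f$ beyond those stated.

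Two small caveats, neither fatal. First, writing $f(t)=\int_0^t f'(s)\,ds$ presupposes $f$ is continuous at $0$; a concave nondecreasing $f$ with $f(0)=0$ can jump at the origin, in which case the representation acquires an extra term $c\cdot\mathbf{1}[t>0]$ whose associated functional is $c\cdot\text{rank}(X)$ --- still subadditive, so the conclusion survives, but you should either add this term or note that all regularizers in the paper (SCAD, MCP) are continuous. Second, when $f'(0^+)=+\infty$ (e.g.\ $f(t)=t^{p}$ with $0<p<1$, relevant to the Schatten quasi-norms cited in the introduction) the measure $\mu$ is infinite near $0$; the interchange of sum and integral is still justified because all integrands are nonnegative (Tonelli) and $\int_{(0,\epsilon]}w\,d\mu(w)<\infty$, but a sentence to this effect would make the argument airtight.
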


\begin{Lemma}\label{lem-regu}
Suppose that $\regu_\lambda$ satisfy Assumption \ref{asup-regu}. Then for any $\Theta,\Theta'\in \R^{d_1\times d_2}$, we have that
\begin{align}
\regu_\lambda(\Theta+\Theta')&\leq \regu_\lambda(\Theta)+\regu_\lambda(\Theta'),\label{eq-regu+}\\
\regu_\lambda(\Theta-\Theta')&\geq \regu_\lambda(\Theta)-\regu_\lambda(\Theta').\label{eq-regu-}
\end{align}
\end{Lemma}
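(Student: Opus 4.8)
The plan is to reduce the two matrix inequalities \eqref{eq-regu+} and \eqref{eq-regu-} to the scalar triangle-type inequalities for singular values already recorded in Lemma \ref{lem-tri}. Recall that $\regu_\lambda(\cdot)=\sum_{j=1}^d p_\lambda(\sigma_j(\cdot))$, so everything hinges on showing that the univariate function $t\mapsto p_\lambda(t)$, restricted to $\R_+$, satisfies the hypotheses of Lemma \ref{lem-tri}: that it is concave, nondecreasing, and vanishes at $0$. Concavity and monotonicity on the nonnegative line are exactly Assumption \ref{asup-regu}(iv), and $p_\lambda(0)=0$ is part of (i). Thus $f=p_\lambda|_{\R_+}$ is an admissible choice in Lemma \ref{lem-tri}.

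With that identification, \eqref{eq-regu+} is immediate: applying \eqref{eq-tri+} with $f=p_\lambda$, $\Theta$ and $\Theta'$ gives
\[
\regu_\lambda(\Theta+\Theta')=\sum_{j=1}^d p_\lambda(\sigma_j(\Theta+\Theta'))\leq \sum_{j=1}^d p_\lambda(\sigma_j(\Theta))+\sum_{j=1}^d p_\lambda(\sigma_j(\Theta'))=\regu_\lambda(\Theta)+\regu_\lambda(\Theta').
\]
For \eqref{eq-regu-}, I would apply \eqref{eq-tri-} with the pair $\Theta$ and $\Theta'$, obtaining $\sum_j p_\lambda(\sigma_j(\Theta-\Theta'))\geq \sum_j p_\lambda(\sigma_j(\Theta))-\sum_j p_\lambda(\sigma_j(\Theta'))$, which is precisely $\regu_\lambda(\Theta-\Theta')\geq \regu_\lambda(\Theta)-\regu_\lambda(\Theta')$. (Alternatively \eqref{eq-regu-} follows from \eqref{eq-regu+} by writing $\Theta=(\Theta-\Theta')+\Theta'$ and rearranging, using that $\regu_\lambda$ depends only on singular values so $\regu_\lambda(\Theta')=\regu_\lambda(-\Theta')$; but the direct route via \eqref{eq-tri-} is cleaner.)

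The only genuine subtlety — and the one place I would be careful — is the verification that $p_\lambda$ really is concave and nondecreasing on all of $\R_+$, not merely on the pieces where it is differentiable. Assumption \ref{asup-regu}(iv) asserts this directly, and one should note that it is consistent with (iii): $\lim_{t\to 0^+}p'_\lambda(t)=\lambda$ together with $p'_\lambda(t)=0$ for $t\geq\nu$ and concavity forces $p'_\lambda$ to be nonincreasing from $\lambda$ down to $0$, so $p_\lambda$ is indeed nondecreasing. I would also remark that $f(0)=f(0^+)$ requires continuity of $p_\lambda$ at $0$, which follows from differentiability away from $0$ plus the finite one-sided derivative limit in (iii). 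Once these bookkeeping points are in place, Lemma \ref{lem-tri} does all the work and no further computation is needed.
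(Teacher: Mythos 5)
Your proof is correct and matches the paper's own argument: both reduce the claim to Lemma \ref{lem-tri} by observing that $p_\lambda$ restricted to $\R_+$ is concave, nondecreasing, and vanishes at $0$ by Assumption \ref{asup-regu}(i) and (iv), then sum over singular values. Your extra remarks verifying monotonicity and continuity of $p_\lambda$ at $0$ are sensible bookkeeping but not a departure from the paper's approach.
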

\begin{proof}
Since the singular values of a matrix is always nonnegative, the univariate function $p_\lambda$ actually  satisfies $p_\lambda: \R_+\to \R_+$. Then by Assumption \ref{asup-regu} (i) and (iv), Lemma \ref{lem-tri} is applicable to concluding that 
\begin{align*}
\sum_{j=1}^{d}p_\lambda(\sigma_j(\Theta+\Theta'))&\leq \sum_{j=1}^{d}p_\lambda(\sigma_j(\Theta))+\sum_{j=1}^{d}p_\lambda(\sigma_j(\Theta')),\\
\sum_{j=1}^{d}p_\lambda(\sigma_j(\Theta-\Theta'))&\geq \sum_{j=1}^{d}p_\lambda(\sigma_j(\Theta))-\sum_{j=1}^{d}p_\lambda(\sigma_j(\Theta')).
\end{align*}
The conclusion then follows directly from the definition of the regularizer $\regu_\lambda$.
\end{proof}

\begin{Lemma}\label{lem-qlambda}
Let $\Q_\lambda$ be defined in \eqref{eq-qlambda}. Then for any $\Theta,\Theta'\in \R^{d_1\times d_2}$, the following relations are true:
\begin{subequations}
\begin{align}
&\inm{\nabla\Q_\lambda(\Theta)-\nabla\Q_\lambda(\Theta')}{\Theta-\Theta'} \geq -\mu\normm{\Theta-\Theta'}_\emph{F}^2,\label{lem-qlambda-11}\\
&\inm{\nabla\Q_\lambda(\Theta)-\nabla\Q_\lambda(\Theta')}{\Theta-\Theta'}\leq 0, \label{lem-qlambda-12}\\
&\Q_\lambda(\Theta)\geq \Q_\lambda(\Theta')+\inm{\nabla\Q_\lambda(\Theta')}{\Theta-\Theta'}- \frac{\mu}{2}\normm{\Theta-\Theta'}_\emph{F}^2,\label{lem-qlambda-13}\\
&\Q_\lambda(\Theta)\leq \Q_\lambda(\Theta')+\inm{\nabla\Q_\lambda(\Theta')}{\Theta-\Theta'}.\label{lem-qlambda-14}
\end{align}
\end{subequations}
\end{Lemma}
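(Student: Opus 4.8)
The plan is to reduce all four relations to two structural facts: that $\Q_\lambda$ is differentiable and concave on $\R^{d_1\times d_2}$, and that $\Phi:=\Q_\lambda+\frac{\mu}{2}\normm{\cdot}_{\mathrm F}^2$ is differentiable and convex. Granting these, \eqref{lem-qlambda-14} is exactly the first-order characterization of concavity of $\Q_\lambda$, and \eqref{lem-qlambda-12} follows by adding \eqref{lem-qlambda-14} to the same inequality with $\Theta$ and $\Theta'$ swapped. For the other two I would use $\nabla\Phi(\Theta)=\nabla\Q_\lambda(\Theta)+\mu\Theta$ (sum rule, together with $\nabla(\frac{\mu}{2}\normm{\cdot}_{\mathrm F}^2)(\Theta)=\mu\Theta$): the first-order convexity inequality for $\Phi$ becomes \eqref{lem-qlambda-13} after expanding and using the identity $\frac{\mu}{2}\normm{\Theta'}_{\mathrm F}^2-\frac{\mu}{2}\normm{\Theta}_{\mathrm F}^2+\mu\inm{\Theta'}{\Theta-\Theta'}=-\frac{\mu}{2}\normm{\Theta-\Theta'}_{\mathrm F}^2$, while monotonicity of $\nabla\Phi$, namely $\inm{\nabla\Phi(\Theta)-\nabla\Phi(\Theta')}{\Theta-\Theta'}\ge0$, becomes \eqref{lem-qlambda-11} after subtracting $\mu\normm{\Theta-\Theta'}_{\mathrm F}^2$ from both sides.

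The content then lies in the two structural facts, and both come from transferring scalar properties of $q_\lambda$ to the spectral function $\Theta\mapsto\sum_{j=1}^{d}q_\lambda(\sigma_j(\Theta))$. First I would record the scalar facts, all consequences of Assumption \ref{asup-regu} and the decomposition $p_\lambda=q_\lambda+\lambda|\cdot|$: $q_\lambda(0)=0$ since $p_\lambda(0)=0$; $q_\lambda$ is even by Assumption \ref{asup-regu}(i); $q_\lambda$ is differentiable for $t\neq0$ (difference of two such functions) and, by Assumption \ref{asup-regu}(iii), $q_\lambda'(t)=p_\lambda'(t)-\lambda\to0$ as $t\to0^+$, so by oddness of $q_\lambda'$ the function $q_\lambda$ is differentiable at $0$ with $q_\lambda'(0)=0$, hence differentiable on all of $\R$; by Assumption \ref{asup-regu}(iv), $q_\lambda=p_\lambda-\lambda t$ is concave on $[0,\infty)$, and since $q_\lambda$ is even, differentiable at $0$, with $q_\lambda'(0)=0$, its derivative is nonincreasing on all of $\R$, so $q_\lambda$ is concave on $\R$ and $-q_\lambda$ is convex on $\R$; finally Assumption \ref{asup-regu}(v) states that $q_\lambda'+\mu\,\mathrm{id}$ is nondecreasing, so $q_\lambda+\frac{\mu}{2}(\cdot)^2$ is convex on $\R$. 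Thus $g_1:=-q_\lambda$ and $g_2:=q_\lambda+\frac{\mu}{2}(\cdot)^2$ are both even, convex, and differentiable on $\R$.

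Next I would invoke the theory of unitarily invariant (spectral) matrix functions: for any even $g:\R\to\R$, the map $\Theta\mapsto\sum_{j=1}^{d}g(\sigma_j(\Theta))$ inherits convexity from $g$ and differentiability from $g$ (with gradient $U\,\mathrm{mat}(g'(\sigma_1(\Theta)),\dots,g'(\sigma_d(\Theta)))V^\top$ for any SVD $\Theta=U\,\mathrm{mat}(\sigma(\Theta))V^\top$, the expression being insensitive to the choice of SVD because $g'$ agrees on equal singular values and $g'(0)=0$ kills any vanishing ones). Applying this with $g=g_1$ shows $-\Q_\lambda$ is convex and differentiable, i.e.\ $\Q_\lambda$ is concave and differentiable; applying it with $g=g_2$ shows $\sum_j q_\lambda(\sigma_j(\cdot))+\frac{\mu}{2}\sum_j\sigma_j(\cdot)^2=\Q_\lambda(\cdot)+\frac{\mu}{2}\normm{\cdot}_{\mathrm F}^2=\Phi$ is convex and differentiable. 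Together with the reduction of the first paragraph, this proves the lemma.

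The step I expect to be the crux is this last transfer. One must make sure $g_1$ and $g_2$ are convex on all of $\R$ rather than merely on $[0,\infty)$ — this is exactly where $q_\lambda'(0)=0$ enters, ruling out a kink at the origin of the wrong sign — and one must be careful with the spectral gradient at matrices having repeated or zero singular values. If one prefers not to cite the general theory, the convexity half can be proved directly from von Neumann's trace inequality $\inm{A}{B}\le\sum_j\sigma_j(A)\sigma_j(B)$ by writing each even convex $g_i$ via its conjugate and lifting the resulting variational formula to matrices; this is the only input beyond elementary calculus and Assumption \ref{asup-regu} that is needed.
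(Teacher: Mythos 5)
Your proof is correct, but it runs in the opposite direction from the paper's. The paper first establishes the gradient-monotonicity bounds \eqref{lem-qlambda-11}--\eqref{lem-qlambda-12} by a direct singular-value computation --- writing $\Theta=UDV^\top$, $\Theta'=U'D'{V'}^\top$, asserting the scalar inequalities $-\mu(\sigma_j-\sigma_j')^2\le (q_\lambda'(\sigma_j)-q_\lambda'(\sigma_j'))(\sigma_j-\sigma_j')\le 0$ pairwise, and passing to the matrix inner product --- and then invokes Nesterov's characterizations of convexity and Lipschitz gradients to deduce \eqref{lem-qlambda-13}--\eqref{lem-qlambda-14}. You instead establish concavity of $\Q_\lambda$ and convexity of $\Phi=\Q_\lambda+\frac{\mu}{2}\normm{\cdot}_{\mathrm F}^2$ as spectral functions of the even scalar functions $-q_\lambda$ and $q_\lambda+\frac{\mu}{2}(\cdot)^2$, read off \eqref{lem-qlambda-13}--\eqref{lem-qlambda-14} as first-order conditions, and recover \eqref{lem-qlambda-11}--\eqref{lem-qlambda-12} from gradient monotonicity. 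Your route has a genuine advantage: the paper's passage from the pairwise scalar inequalities to the matrix inequality is not immediate when $\Theta$ and $\Theta'$ have different singular vectors, since the cross terms $\inm{U\,\mathrm{mat}(q_\lambda'(\sigma))V^\top}{U'D'{V'}^\top}$ do not reduce to $\sum_j q_\lambda'(\sigma_j)\sigma_j'$; handling them properly requires exactly the von Neumann trace inequality or the Lewis-type convexity transfer for unitarily invariant functions that you cite. The price you pay is reliance on that non-elementary spectral-function machinery (convexity and differentiability of $f\circ\sigma$ for absolutely symmetric $f$), whereas the paper's intended argument stays at the level of singular values; your careful verification that $q_\lambda'(0)=0$ so that evenness extends concavity of $q_\lambda$ and convexity of $q_\lambda+\frac{\mu}{2}(\cdot)^2$ from $[0,\infty)$ to all of $\R$ is exactly the point that makes the transfer legitimate.
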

\begin{proof}
For any matrices $\Theta,\Theta'\in \R^{d_1\times d_2}$, let $d=\min\{d_1,d_2\}$, and we use $\sigma,\sigma'$ to stand for the vectors consisting of singular values of $\Theta,\Theta'$ in decreasing order, respectively. 
Then one has the singular value decompositions for $\Theta,\Theta'$ as follows:
\begin{equation*}
\begin{aligned}
\Theta&=UDV^\top,\\
\Theta'&=U'D'{V'}^\top,
\end{aligned}
\end{equation*}
where $D,D'\in \R^{d\times d}$ are diagonal matrices with $D=\text{diag}(\sigma), D'=\text{diag}(\sigma')$. By Assumption \ref{asup-regu} (iv)-(v), we have for each pair of singular values of $\Theta,\Theta'$: ($\sigma_j,\sigma'_j$), $j=1,2,\cdots,d$, it holds that 
\begin{equation*}
-\mu(\sigma_j-\sigma'_j)^2\leq (q'_\lambda(\sigma_j)-q'_\lambda(\sigma'_j))(\sigma_j-\sigma'_j)\leq 0,
\end{equation*}
Then it follows from the definitions of $D,D'$ that 
\begin{equation*}
-\mu\normm{\Theta-\Theta'}_\text{F}^2\leq \inm{\nabla\Q_\lambda(UDV^\top)-\nabla\Q_\lambda(U'D'{V'}^\top)}{\Theta-\Theta'}\leq 0.
\end{equation*}
Thus \eqref{lem-qlambda-11} and \eqref{lem-qlambda-12} holds directly.
Combining \cite[Theorem 2.1.5 and Theorem 2.1.9]{nesterov2013introductory} and \eqref{lem-qlambda-11} and \eqref{lem-qlambda-12}, we have that
the convex function $-\Q_\lambda$ satisfies
\begin{align*}
-\Q_\lambda(\Theta)&\leq -\Q_\lambda(\Theta')+\inm{\nabla(-\Q_\lambda(\Theta'))}{\Theta-\Theta'}+ \frac{\mu}{2}\normm{\Theta-\Theta'}_\text{F}^2,\\
-\Q_\lambda(\Theta)&\geq -\Q_\lambda(\Theta')+\inm{\nabla(-\Q_\lambda(\Theta'))}{\Theta-\Theta'},
\end{align*}
which respectively implies that the function $\Q_\lambda$ satisfies \eqref{lem-qlambda-13} and \eqref{lem-qlambda-14}.
The proof is complete.
\end{proof}

\section{Main results}\label{sec-main}

In this section, we establish the main result on the recovery bound for any stationary point of the general nonconvex estimator \eqref{eq-esti}. The result is deterministic in nature, and probabilistic consequences for the errors-in-variables matrix regression model are given in the next section. 

Before we proceed, some additional notations are needed. Let $\obj(\Theta)=\loss_N(\Theta)+\regu_\lambda(\Theta)$ represent the objective function to be minimized. Recall that the regularizer can be decomposed as $\regu_\lambda(\Theta)=\Q_\lambda(\Theta)+\lambda\normm{\Theta}_*$. Then it holds that $\obj(\Theta)=\loss_N(\Theta)+\Q_\lambda(\Theta)+\lambda\normm{\Theta}_*$. Denote $\tilde{\loss}_N(\Theta)=\loss_N(\Theta)+\Q_\lambda(\Theta)$, and it follows that $\obj(\Theta)=\tilde{\loss}_N(\Theta)+\lambda\normm{\Theta}_*$. In this way, one sees that the objective function is decomposed into a differentiable but nonconvex function and a nonsmooth but convex function.

Let $d=\min\{d_1,d_2\}$. Consider the singular value
decomposition of the parameter matrix $\Theta^*=U^*D^*{V^*}^\top$, where $U^*\in \R^{d_1\times d}$ and $V^*\in \R^{d_2\times d}$
are orthonormal, and $D\in \R^{d\times d}$ is diagonal. By the low-rank assumption \eqref{eq-rank}, one has that there are only $r$ nonzero elements on the diagonal of $D$. Specifically, $\sigma(\Theta^*)={(\sigma_1(\Theta^*),\sigma_2(\Theta^*),\cdots,\sigma_r(\Theta^*),0,\cdots,0)}^\top\in \R^d$ is the vector formed by the diagonal elements of $D$, with $\sigma_1(\Theta^*)\geq \sigma_2(\Theta^*)\geq \cdots \sigma_r(\Theta^*)>0$.

For any index set $S\subseteq \{1,2,\cdots,d\}$, we define the following two subspaces of $\R^{d_1\times d_2}$ associated with $\Theta^*$ as:
\begin{subequations}\label{eq-sub}
\begin{align}
\Aa_S(U^*,V^*)&:=\{\Delta\in \R^{d_1\times d_2}\big|\text{row}(\Delta)\subseteq \text{col}(V_S^*), \text{col}(\Delta)\subseteq \text{col}(U_S^*)\},\label{eq-sub1}\\
\Bb_S(U^*,V^*)&:=\{\Delta\in \R^{d_1\times d_2}\big|\text{row}(\Delta)\perp \text{col}(V_S^*), \text{col}(\Delta)\perp \text{col}(U_S^*)\},\label{eq-sub2}
\end{align}
\end{subequations}
where $\text{row}(\Delta)\in \R^{d_2}$ and $\text{col}(\Delta)\in \R^{d_1}$ respectively stand for the row space and column space of the matrix $\Delta$, and $V_S^*$ and $U_S^*$ respectively represent the matrix formed by the rows of $V^*$ and $U^*$ indexed by the set $S$. 
When matrices $(U^*,V^*)$ are known from the context, we use the shorthand notation $\Aa_S$ and $\Bb_S$ instead. Two projection operators onto the subspaces $\Aa_S$ and $\Bb_S$ are defined as follows:
\begin{subequations}\label{eq-proj}
\begin{align}
\Pi_{\Aa_S}(\Theta)&=U_S^*{U_S^*}^\top\Theta V_S^*{V_S^*}^\top,\label{eq-proj1}\\
\Pi_{\Bb_S}(\Theta)&=(\I_{d_1}-U_S^*{U_S^*}^\top)\Theta(\I_{d_2}-V_S^*{V_S^*}^\top).\label{eq-proj2}
\end{align}
\end{subequations}

Similar definitions of $\Aa_S$ and $\Bb_S$ have been introduced in \cite{agarwal2012fast,negahban2011estimation} to study low-rank recovery problems with clean covariates. Particularly, let $J$ denote the index set corresponding to the nonzero elements of $\sigma(\Theta^*)$, i.e., 
\begin{equation}\label{eq-J}
J=\{1,2,\cdots,r\}.
\end{equation}
Then the nuclear norm is decomposable with respect to $\Aa_J$ and $\Bb_J$, that is, $\normm{\Theta+\Theta'}_*=\normm{\Theta}_*+\normm{\Theta'}_*$ holds for any pair of matrices $\Theta\in \Aa_J$ and $\Theta'\in \Bb_J$. Moreover, recall the constant in Assumption \ref{asup-regu} (iv) and define two index set corresponding to the larger and smaller singular values of $\Theta^*$ as follows:
\begin{equation}\label{eq-muj}
J_1=\{j\big|\sigma_j(\Theta^*)\geq \mu\}\quad  \mbox{and}\quad J_2=\{j\big|0<\sigma_j(\Theta^*)<\mu\},
\end{equation}
with $|J_1|=r_1$ and $|J_2|=r_2$. It is easy to see that $r_1+r_2=r$ and that $\Pi_{\Aa_J}(\Delta)=\Pi_{\Aa_{J_1}}(\Delta)+\Pi_{\Aa_{J_2}}(\Delta)$ for any matrix $\Delta\in \R^{d_1\times d_2}$.


We now state a useful technical lemma that helps us decompose the error matrix $\Delta:=\Theta-\Theta^*$, where $\Theta^*$ is the true parameter matrix and $\Theta$ is arbitrary, as the sum of two matrices $\Delta'$ and $\Delta''$ such that the
rank of $\Delta'$ is not too large. In addition, the difference between $\regu_\lambda(\Theta^*)$ and $\regu_\lambda(\Theta)$ can be bounded from above in terms of the nuclear norms. 



\begin{Lemma}\label{lem-decom}
Let $\Theta\in \R^{d_1\times d_2}$ be an arbitrary matrix. Let $T=\{1,2,\cdots,2r\}$, and $\Delta_T$ and $\Delta_{T^c}$ be given in \eqref{eq-thetaj}. Then the following conclusions hold:\\
\rm (i) there exists a decomposition $\Delta=\Delta'+\Delta''$ such that the matrix $\Delta'$ with $\text{rank}(\Delta')\leq 2r$;\\
\rm (ii) $\regu_\lambda(\Theta^*)-\regu_\lambda(\Theta)\leq \lambda(\normm{\Delta_T}_*-\normm{\Delta_{T^c}}_*)$.
\end{Lemma}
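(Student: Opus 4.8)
The plan is to prove the two conclusions in turn, following the standard subspace-decomposition strategy of \cite{negahban2011estimation,agarwal2012fast} but taking care of the extra nonconvex term $\Q_\lambda$ in $\regu_\lambda$. For part (i), I would work directly from the singular value decomposition $\Delta=UDV^\top$ used to define $\Delta_T$ and $\Delta_{T^c}$ in \eqref{eq-thetaj}. Setting $\Delta'=\Delta_T$ and $\Delta''=\Delta_{T^c}$, the matrix $\Delta'$ retains only the $2r$ largest singular values of $\Delta$ (those indexed by $T=\{1,\dots,2r\}$), so $\text{rank}(\Delta')\le 2r$ is immediate from the definition of $\text{mat}(\sigma_T(\Delta))$. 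The only thing worth spelling out is that this is a genuine decomposition, i.e.\ $\Delta=\Delta'+\Delta''$, which holds because $\text{mat}(\sigma_T(\Delta))+\text{mat}(\sigma_{T^c}(\Delta))=D$. So part (i) is essentially a bookkeeping step.

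For part (ii), the plan is to first pass from $\regu_\lambda$ to the nuclear norm using the structure established earlier, and then exploit the low rank of $\Theta^*$. First I would invoke Lemma \ref{lem-regu}, inequality \eqref{eq-regu-}, applied to the pair $\Theta$ and $\Theta-\Theta^*=\Delta$: this gives $\regu_\lambda(\Theta^*)=\regu_\lambda(\Theta-\Delta)\ge \regu_\lambda(\Theta)-\regu_\lambda(\Delta)$, hence $\regu_\lambda(\Theta^*)-\regu_\lambda(\Theta)\le \regu_\lambda(\Delta)$. Wait — that bounds the left side by $\regu_\lambda(\Delta)$, but I need a bound in terms of $\normm{\Delta_T}_*-\normm{\Delta_{T^c}}_*$, which can be negative, so a cruder argument will not suffice. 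The correct route is to use the decomposability of the nuclear norm with respect to the subspaces $\Aa_J$ and $\Bb_J$ (with $J=\{1,\dots,r\}$, $r=\text{rank}(\Theta^*)$) together with Assumption \ref{asup-regu}\,(iii)/(vi), which forces $p_\lambda'(0^+)=\lambda$ and $|q_\lambda'|\le\lambda$ so that $p_\lambda(t)\le\lambda t$ on $[0,\infty)$ (from condition (ii), $p_\lambda(t)/t$ is nonincreasing with limit $\lambda$ at $0^+$). Thus $\regu_\lambda(\Delta)\le\lambda\normm{\Delta}_*$, and more importantly $\regu_\lambda$ inherits enough of the nuclear-norm geometry: writing $\Delta=\Pi_{\Aa_J}(\Delta)+\Pi_{\Aa_J}^\perp(\Delta)$ and using that $\text{rank}(\Pi_{\Aa_J}(\Delta))\le 2r$ so that $\Pi_{\Aa_J}(\Delta)$ is supported on $T$, I would derive
\[
\regu_\lambda(\Theta^*)-\regu_\lambda(\Theta)\le \regu_\lambda(\Pi_{\Aa_J}(\Delta))-\regu_\lambda(\Pi_{\Aa_J}^\perp(\Delta))\le \lambda\big(\normm{\Delta_T}_*-\normm{\Delta_{T^c}}_*\big),
\]
where the first inequality comes from $\regu_\lambda(\Theta^*)-\regu_\lambda(\Theta)=\regu_\lambda(\Theta^*)-\regu_\lambda(\Theta^*+\Delta)$ combined with the triangle-type inequalities \eqref{eq-regu+}, \eqref{eq-regu-} restricted to the decomposable subspaces, and the second uses $p_\lambda(t)\le\lambda t$ on the $\Aa_J$ part while keeping the full regularizer lower bound $\regu_\lambda(\Pi_{\Aa_J}^\perp(\Delta))\ge\dots$. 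I would also use that $\normm{\Pi_{\Aa_J}^\perp(\Delta)}_*\ge\normm{\Delta_{T^c}}_*$ (the tail singular values) and $\normm{\Pi_{\Aa_J}(\Delta)}_*\le\normm{\Delta_T}_*$ — or the reverse, depending on orientation — which follows since $\Pi_{\Aa_J}(\Delta)$ has rank at most $2r$.

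The main obstacle I anticipate is the interplay between two different "supports": the rank-$r$ subspace $\Aa_J$ attached to $\Theta^*$ (where decomposability of the nuclear norm holds exactly) and the index set $T=\{1,\dots,2r\}$ attached to the \emph{singular values of $\Delta$ itself} (where $\Delta_T,\Delta_{T^c}$ are defined). These do not a priori coincide, and bridging them requires the observation $\text{rank}(\Pi_{\Aa_J}(\Delta))\le r+r=2r$, so $\Pi_{\Aa_J}(\Delta)$ contributes to at most $2r$ singular values of a perturbed matrix — but one must be careful that $\normm{\Pi_{\Aa_J}(\Delta)}_*$ and $\normm{\Delta_T}_*$ are compared in the right direction, using that $\Delta_T$ is the \emph{best} rank-$2r$ nuclear-norm approximation contribution. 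The clean way to handle this is: $\normm{\Delta}_*=\normm{\Pi_{\Aa_J}(\Delta)}_*+\normm{\Pi_{\Aa_J}^\perp(\Delta)}_*=\normm{\Delta_T}_*+\normm{\Delta_{T^c}}_*$ and $\normm{\Pi_{\Aa_J}(\Delta)}_*\le\normm{\Delta_T}_*$, which together give $\normm{\Pi_{\Aa_J}^\perp(\Delta)}_*\ge\normm{\Delta_{T^c}}_*$; feeding these into the $\regu_\lambda$ estimate with the bound $p_\lambda(\sigma)\le\lambda\sigma$ used only on the (low-rank) $\Aa_J$ part and the monotonicity/nonnegativity of $p_\lambda$ on the complementary part yields the claim. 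The rest is routine.
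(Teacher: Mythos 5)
Your part (i) is fine but not what the paper does, and the difference matters for part (ii). You take $\Delta'=\Delta_T$, $\Delta''=\Delta_{T^c}$ from the SVD of $\Delta$ itself; this trivially gives $\mathrm{rank}(\Delta')\le 2r$ and satisfies the literal statement of (i). The paper instead uses the Recht--Fazel--Parrilo construction: writing $\Xi={U^*}^\top\Delta V^*$ in block form relative to the singular subspaces of $\Theta^*$, it puts the $\Xi_{11},\Xi_{12},\Xi_{21}$ blocks into $\Delta'$ (rank $\le r+r=2r$) and the $\Xi_{22}$ block into $\Delta''$, so that $\Delta''$ has row and column spaces orthogonal to those of $\Theta^*$. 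That orthogonality is exactly what makes $\regu_\lambda(\Pi_{\Aa_J}(\Theta^*)+\Delta'')=\regu_\lambda(\Theta^*)+\regu_\lambda(\Delta'')$ (decomposability) available in part (ii). Your substitute, $\Pi_{\Aa_J}(\Delta)$ and its ``complement,'' does not do this job: $\Pi_{\Aa_J}(\Delta)=U_J^*{U_J^*}^\top\Delta V_J^*{V_J^*}^\top$ has rank at most $r$ (not $2r$; your count $r+r$ belongs to the RFP piece), and $\Delta-\Pi_{\Aa_J}(\Delta)$ contains the two cross blocks, which lie in neither $\Aa_J$ nor $\Bb_J$. Consequently the identity $\normm{\Delta}_*=\normm{\Pi_{\Aa_J}(\Delta)}_*+\normm{\Pi_{\Aa_J}^{\perp}(\Delta)}_*$ you rely on is false in general (the nuclear norm is only subadditive there), and the reduction to $\regu_\lambda(\Pi_{\Aa_J}(\Delta))-\regu_\lambda(\Pi_{\Aa_J}^{\perp}(\Delta))$ is not justified. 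This gap is repairable, but only by adopting the RFP decomposition you set aside.

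The second gap is not repairable with the tools you invoke. To conclude $\regu_\lambda(\Delta_T)-\regu_\lambda(\Delta_{T^c})\le\lambda\bigl(\normm{\Delta_T}_*-\normm{\Delta_{T^c}}_*\bigr)$ you use $p_\lambda(t)\le\lambda t$ on the $T$ part --- correct direction --- but on the $T^c$ part you would need the reverse bound $\regu_\lambda(\Delta_{T^c})\ge\lambda\normm{\Delta_{T^c}}_*$, which contradicts $p_\lambda(t)\le\lambda t$ except when $p_\lambda$ is exactly linear; this is precisely the spot you leave as ``$\regu_\lambda(\Pi_{\Aa_J}^{\perp}(\Delta))\ge\dots$'' with the dots never filled in. Termwise bounds cannot work here: the desired inequality rearranges to $\sum_{j\in T^c}\bigl(\lambda\sigma_j(\Delta)-p_\lambda(\sigma_j(\Delta))\bigr)\le\sum_{j\in T}\bigl(\lambda\sigma_j(\Delta)-p_\lambda(\sigma_j(\Delta))\bigr)$, a comparison of two sums of nonnegative terms with $|T^c|$ possibly much larger than $|T|$, and it is the one genuinely non-routine ingredient of the lemma. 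The paper does not derive it from $p_\lambda(t)\le\lambda t$ either; after reducing to $\regu_\lambda(\Delta')-\regu_\lambda(\Delta'')\le\regu_\lambda(\Delta_T)-\regu_\lambda(\Delta_{T^c})$ via monotonicity of $p_\lambda$, it imports the comparison wholesale from an external result (Lemma~6 of Loh--Wainwright). So your sketch reproduces the easy half of the argument and elides exactly the step where the difficulty is concentrated.
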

\begin{proof}
(i) The first part of this lemma is proved in  \cite[Lemma 3.4]{recht2010guaranteed}, and we here provide the proof for completeness. Write the singular value decomposition of $\Theta^*$ as $\Theta^*=U^*D^*{V^*}^\top$, where $U^*\in \R^{d_1\times d_1}$ and $V^*\in \R^{d_2\times d_2}$ are orthogonal matrices, and $D^*\in\R^{d_1\times d_2}$ is the matrix formed by the singular values of $\Theta^*$. Define the matrix $\Xi={U^*}^\top\Delta V^*\in \R^{d_1\times d_2}$, and partition $\Xi$ in block form as follows
      \[
      \Xi:=\begin{pmatrix}
        \Xi_{11} & \Xi_{12} \\
        \Xi_{21} & \Xi_{22}
        \end{pmatrix},\ \text{where}\ \Xi_{11}\in \R^{r\times r}\ \text{and}\  \Xi_{22}\in \R^{(m_1-r)\times (m_2-r)}.
      \]
      Set the matrices as
      \[
      \Delta':=U^*\begin{pmatrix}
        \textbf{0} & \textbf{0} \\
        \textbf{0} & \Xi_{22}
        \end{pmatrix}{V^*}^\top\ \text{and}\  \Delta'':=\Delta-\Delta'.
      \]
      Then the rank of $\Delta'$ is upper bounded as
      \begin{equation*}
        \text{rank}(\Delta')=\text{rank}\begin{pmatrix}
        \Xi_{11} & \Xi_{12} \\
        \Xi_{21} & \textbf{0}
        \end{pmatrix}\leq
        \text{rank}\begin{pmatrix}
        \Xi_{11} & \Xi_{12} \\
        \textbf{0} & \textbf{0}
        \end{pmatrix}+
        \text{rank}\begin{pmatrix}
        \Xi_{11} & \textbf{0} \\
        \Xi_{21} & \textbf{0}
        \end{pmatrix}\leq 2r,
      \end{equation*}
      which established Lemma \ref{lem-decom}(i).\\
(ii) 
It follows from the constructions of $\Delta'$ and $\Delta''$
that $\sigma(\Delta')+\sigma(\Delta'')=\sigma(\Delta)$ with $|\text{supp}(\sigma(\Delta'))|\leq 2r$ and $\inm{\Delta'}{\Delta''}=0$.
    On the other hand, recall the definition of the set $J$. Note that the decomposition $\Theta^*=\Pi_{\Aa_J}(\Theta^*)+\Pi_{\Bb_J}(\Theta^*)=\Pi_{\Aa_J}(\Theta^*)$ holds due to the fact that $\Pi_{\Bb_J}(\Theta^*)=\textbf{0}$. This equality, together with Lemma \ref{lem-regu}, implies that
      \begin{equation}\label{eq-lem4-2}
      \begin{aligned}
       \regu_\lambda(\Theta)&=\regu_\lambda(\Pi_{\Aa_J}(\Theta^*)+\Delta''+\Delta'))\\
       &=\regu_\lambda[(\Pi_{\Aa_J}(\Theta^*)+\Delta'')-(-\Delta')]\\
       &\geq \regu_\lambda(\Pi_{\Aa_J}(\Theta^*)+\Delta'')-\regu_\lambda(-\Delta')\\
       &\geq \regu_\lambda(\Pi_{\Aa_J}(\Theta^*))+\regu_\lambda(\Delta'')-\regu_\lambda(\Delta'),
      \end{aligned}
      \end{equation}
where the last inequality is from Assumption \ref{asup-regu} (i). 
      Consequently, we have
      \begin{equation}\label{eq-lem4-3}
      \begin{aligned}
      \regu_\lambda(\Theta^*)-\regu_\lambda(\Theta)
      &\leq
      \regu_\lambda(\Theta^*)-\regu_\lambda(\Pi_{\Aa_J}(\Theta^*))-\regu_\lambda(\Delta'')+\regu_\lambda(\Delta')\\
      &\leq
      \regu_\lambda(\Delta')-\regu_\lambda(\Delta'')\\
      &\leq
      \regu_\lambda(\Delta_T)-\regu_\lambda(\Delta_{T^c}),
      \end{aligned}
      \end{equation}
      where the last inequality is from the definition of the set $T$ and Assumption \ref{asup-regu} (iv).
Then it follows from \cite[Lemma 6]{loh2013local} that 
$$\sum_{i=1}^dp_\lambda(\sigma_i(\Delta_T))-\sum_{i=1}^dp_\lambda(\sigma_i(\Delta_{T^c}))\leq \lambda\left(\sum_{i=1}^d\sigma_i(\Delta_T)-\sum_{i=1}^d\sigma_j(\Delta_{T^c})\right).$$
Combining this inequality with \eqref{eq-lem4-3} and the definition of $\regu_\lambda$, we arrive at the conclusion. The proof is complete.
\end{proof}

Recall the feasible region $\Omega$ given in \eqref{eq-feasi}. The next lemma shows that the error matrix $\tilde{\Delta}:=\tilde{\Theta}-\Theta^*$ belongs to a certain set, where $\tilde{\Theta}\in \Omega$ is an arbitrary stationary point of the optimization problem \eqref{eq-esti} satisfing the first-order necessary condition:
\begin{equation}\label{1st-cond}
\inm{\nabla\loss_N(\tilde{\Theta})+\nabla\regu_\lambda(\tilde{\Theta})}{\Theta-\tilde{\Theta}}\geq 0,\quad \text{for all}\ \Theta\in \Omega.
\end{equation}

\begin{Lemma}\label{lem-cone}
Let $r, \omega$ be positive numbers such that $\Theta^*\in \Omega$ and satisfies \eqref{eq-rank}. Let $\tilde{\Theta}$ be a stationary point of the optimization problem \eqref{eq-esti}. Let $T=\{1,2,\cdots,2r\}$, and $\tilde{\Delta}_T$ and $\tilde{\Delta}_{T^c}$ be given in \eqref{eq-thetaj}. Suppose that the nonconvex regularizer $\regu_\lambda$ satisfies Assumption \ref{asup-regu}, and that the empirical loss function $\loss_N$ satisfies the \emph{LSC} condition (cf. \eqref{eq-rsc1}) with $\alpha_1>\mu$.
Assume that $(\lambda, \omega)$ are chosen to satisfy $\lambda\geq 2\max\{\normm{\nabla\loss_N(\Theta^*)}_\text{op},4\omega\tau_1\}$, then one has that 
\begin{equation}\label{eq-lemcone}
\normm{\tilde{\Delta}_{T^c}}_*\leq 7\normm{\tilde{\Delta}_T}_*.
\end{equation}
\end{Lemma}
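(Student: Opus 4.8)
I would prove this by exploiting the first‑order stationarity condition \eqref{1st-cond} at the feasible point $\Theta=\Theta^*$ and converting it into an inequality in which $\normm{\tilde{\Delta}_T}_*$ and $\normm{\tilde{\Delta}_{T^c}}_*$ appear with opposite signs, where $\tilde{\Delta}=\tilde{\Theta}-\Theta^*$. Using the splitting $\obj=\tilde{\loss}_N+\lambda\normm{\cdot}_*$ with $\tilde{\loss}_N=\loss_N+\Q_\lambda$ differentiable, stationarity means $\inm{\nabla\tilde{\loss}_N(\tilde{\Theta})+\lambda\hat{Z}}{\Theta^*-\tilde{\Theta}}\ge 0$ for some $\hat{Z}\in\partial\normm{\tilde{\Theta}}_*$, i.e.
\begin{equation*}
0\le \inm{\nabla\loss_N(\tilde{\Theta})}{\Theta^*-\tilde{\Theta}}+\inm{\nabla\Q_\lambda(\tilde{\Theta})}{\Theta^*-\tilde{\Theta}}+\lambda\inm{\hat{Z}}{\Theta^*-\tilde{\Theta}}.
\end{equation*}
The plan is to upper‑bound each of the three terms on the right.

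For the last two terms I would use convexity of the nuclear norm, $\inm{\hat{Z}}{\Theta^*-\tilde{\Theta}}\le\normm{\Theta^*}_*-\normm{\tilde{\Theta}}_*$, together with inequality \eqref{lem-qlambda-13} of Lemma \ref{lem-qlambda} applied with $\Theta=\Theta^*$, $\Theta'=\tilde{\Theta}$, which gives $\inm{\nabla\Q_\lambda(\tilde{\Theta})}{\Theta^*-\tilde{\Theta}}\le\Q_\lambda(\Theta^*)-\Q_\lambda(\tilde{\Theta})+\frac{\mu}{2}\normm{\tilde{\Delta}}_\text{F}^2$. Adding these and recalling $\regu_\lambda=\Q_\lambda+\lambda\normm{\cdot}_*$, the sum of the last two terms is at most $\regu_\lambda(\Theta^*)-\regu_\lambda(\tilde{\Theta})+\frac{\mu}{2}\normm{\tilde{\Delta}}_\text{F}^2$, which by Lemma \ref{lem-decom}(ii) is at most $\lambda\bigl(\normm{\tilde{\Delta}_T}_*-\normm{\tilde{\Delta}_{T^c}}_*\bigr)+\frac{\mu}{2}\normm{\tilde{\Delta}}_\text{F}^2$. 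For the first term I would add and subtract $\nabla\loss_N(\Theta^*)$: the LSC condition \eqref{eq-rsc1} with $\Delta=\tilde{\Delta}$ yields $-\inm{\nabla\loss_N(\tilde{\Theta})-\nabla\loss_N(\Theta^*)}{\tilde{\Delta}}\le-\alpha_1\normm{\tilde{\Delta}}_\text{F}^2+\tau_1\normm{\tilde{\Delta}}_*^2$, and Hölder's inequality $|\inm{A}{B}|\le\normm{A}_\text{op}\normm{B}_*$ together with $\lambda\ge 2\normm{\nabla\loss_N(\Theta^*)}_\text{op}$ gives $-\inm{\nabla\loss_N(\Theta^*)}{\tilde{\Delta}}\le\frac{\lambda}{2}\normm{\tilde{\Delta}}_*$.

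Collecting these bounds I would obtain
\begin{equation*}
0\le -\Bigl(\alpha_1-\frac{\mu}{2}\Bigr)\normm{\tilde{\Delta}}_\text{F}^2+\tau_1\normm{\tilde{\Delta}}_*^2+\frac{\lambda}{2}\normm{\tilde{\Delta}}_*+\lambda\normm{\tilde{\Delta}_T}_*-\lambda\normm{\tilde{\Delta}_{T^c}}_*.
\end{equation*}
Since $\alpha_1>\mu\ge 0$, the Frobenius term is nonpositive and can be discarded. Feasibility $\tilde{\Theta},\Theta^*\in\Omega$ and the triangle inequality give $\normm{\tilde{\Delta}}_*\le 2\omega$, hence $\tau_1\normm{\tilde{\Delta}}_*^2\le 2\omega\tau_1\normm{\tilde{\Delta}}_*\le\frac{\lambda}{4}\normm{\tilde{\Delta}}_*$ by the choice $\lambda\ge 8\omega\tau_1$. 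Substituting $\normm{\tilde{\Delta}}_*=\normm{\tilde{\Delta}_T}_*+\normm{\tilde{\Delta}_{T^c}}_*$ (the two pieces have orthogonal row and column spaces) and rearranging yields $\frac14\normm{\tilde{\Delta}_{T^c}}_*\le\frac74\normm{\tilde{\Delta}_T}_*$, i.e. $\normm{\tilde{\Delta}_{T^c}}_*\le 7\normm{\tilde{\Delta}_T}_*$, so $\tilde{\Delta}\in\Cc$.

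I expect the only delicate points to be: (a) handling the nonconvex component $\Q_\lambda$, where the quadratic slack $\frac{\mu}{2}\normm{\tilde{\Delta}}_\text{F}^2$ coming from \eqref{lem-qlambda-13} must be absorbed by the LSC curvature — this is exactly why $\alpha_1>\mu$ is assumed; and (b) bookkeeping the numerical constants, namely the $\frac12$ from the dual‑norm bound on $\nabla\loss_N(\Theta^*)$ and the $\frac14$ from the $\omega\tau_1$ term, so that, combined with the trivial coefficient $1$ on $\normm{\tilde{\Delta}_T}_*$, they produce precisely the constant $7$ defining $\Cc$ in \eqref{eq-cone}. Everything else is a routine chain of triangle and Hölder inequalities.
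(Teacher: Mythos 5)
Your proposal is correct and follows essentially the same route as the paper's proof: stationarity tested at $\Theta^*$, the LSC condition after adding and subtracting $\nabla\loss_N(\Theta^*)$, the bounds on the concave part from Lemma \ref{lem-qlambda}, convexity of the nuclear norm, Lemma \ref{lem-decom}(ii), the side constraint $\normm{\tilde{\Delta}}_*\leq 2\omega$, and the identical constant bookkeeping yielding the cone constant $7$. The only (harmless) deviation is that you invoke \eqref{lem-qlambda-13} directly and obtain the slack $\tfrac{\mu}{2}\normm{\tilde{\Delta}}_\text{F}^2$, whereas the paper combines \eqref{lem-qlambda-11} with \eqref{lem-qlambda-14} and gets $\mu\normm{\tilde{\Delta}}_\text{F}^2$; both are absorbed by the assumption $\alpha_1>\mu$.
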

\begin{proof}
By the RSC condition \eqref{eq-rsc1}, one has that
\begin{equation}\label{eq-lem3-1}
\inm{\nabla\loss_N(\tilde{\Theta})-\nabla\loss_N(\Theta^*)}{\tilde{\Delta}}\geq \alpha_1\normm{\tilde{\Delta}}_\text{F}^2-\tau_1\normm{\tilde{\Delta}}_*^2.
\end{equation}
On the other hand, it follows from \eqref{lem-qlambda-11} and  \eqref{lem-qlambda-14} in Lemma \ref{lem-qlambda} that
\begin{equation*}
\begin{aligned}
\inm{\nabla\regu_\lambda(\tilde{\Theta})}{\Theta^*-\tilde{\Theta}}
&= \inm{\nabla\Q_\lambda(\tilde{\Theta})+\lambda\tilde{G}}{\Theta^*-\tilde{\Theta}}\\
&\leq \inm{\nabla\Q_\lambda(\Theta^*)}{\Theta^*-\tilde{\Theta}}+\mu\normm{\Theta^*-\tilde{\Theta}}_\text{F}^2+\inm{\lambda\tilde{G}}{\Theta^*-\tilde{\Theta}}\\
&\leq \Q_\lambda(\Theta^*)-\Q_\lambda(\tilde{\Theta})+\mu\normm{\Theta^*-\tilde{\Theta}}_\text{F}^2+\inm{\lambda\tilde{G}}{\Theta^*-\tilde{\Theta}},
\end{aligned}
\end{equation*}
where $\tilde{G}\in \partial \normm{\tilde{\Theta}}_*$.
Moreover, since the function $\normm{\cdot}_*$ is convex, one has that
\begin{equation*}
\normm{\Theta^*}_*-\normm{\tilde{\Theta}}_*\geq \inm{\tilde{G}}{\Theta^*-\tilde{\Theta}}.
\end{equation*}
This, together with the former inequality, implies that
\begin{equation}\label{eq-lem3-2}
\inm{\nabla\regu_\lambda(\tilde{\Theta})}{\Theta^*-\tilde{\Theta}}\leq 
\regu_\lambda(\Theta^*)-\regu_\lambda(\tilde{\Theta})+\mu\normm{\tilde{\Delta}}_\text{F}^2.
\end{equation}
Then combining \eqref{eq-lem3-1}, \eqref{eq-lem3-2} and \eqref{1st-cond} (with $\Theta^*$ in place of $\Theta$), we obtain that
\begin{equation}\label{eq-lem3-3}
\begin{aligned}
\alpha_1\normm{\tilde{\Delta}}_\text{F}^2-\tau_1\normm{\tilde{\Delta}}_*^2
&\leq -\inm{\nabla\loss_N(\Theta^*)}{\tilde{\Delta}}+\regu_\lambda(\Theta^*)-\regu_\lambda(\tilde{\Theta})+\mu\normm{\tilde{\Delta}}_\text{F}^2\\
&\leq  \normm{\nabla\loss_N(\Theta^*)}_{\text{op}}\normm{\tilde{\Delta}}_*+\regu_\lambda(\Theta^*)-\regu_\lambda(\tilde{\Theta})+\mu\normm{\tilde{\Delta}}_\text{F}^2\\
&\leq  \frac{\lambda}{2}\normm{\tilde{\Delta}}_*+\regu_\lambda(\Theta^*)-\regu_\lambda(\tilde{\Theta})+\mu\normm{\tilde{\Delta}}_\text{F}^2\\
\end{aligned}
\end{equation}
where the second inequality is from H{\"o}lder's inequality, and the last inequality is from the assumption that $\lambda\geq 2\normm{\nabla\loss_N(\Theta^*)}_{\text{op}}$. 
It then follows from Lemma \ref{lem-decom} (ii) and noting the fact that $\normm{\tilde{\Delta}}_*\leq \normm{\Theta^*}_*+\normm{\tilde{\Theta}}_*\leq 2\omega$, one has from \eqref{eq-lem3-3} that 
\begin{equation}\label{eq-lem3-4}
\begin{aligned}
(\alpha_1-\mu)\normm{\tilde{\Delta}}_\text{F}^2&\leq \frac{\lambda}{2}\normm{\tilde{\Delta}}_*+2\omega\tau_1\normm{\tilde{\Delta}}+\lambda(\normm{\tilde{\Delta}_T}_*-\normm{\tilde{\Delta}_{T^c}}_*)\\
&\leq \frac{3}{4}\lambda\normm{\tilde{\Delta}}_*+\lambda(\normm{\tilde{\Delta}_T}_*-\normm{\tilde{\Delta}_{T^c}}_*)\\
&\leq \frac{3}{4}\lambda(\normm{\tilde{\Delta}_T}_*+\normm{\tilde{\Delta}_{T^c}}_*)+\lambda(\normm{\tilde{\Delta}_T}_*-\normm{\tilde{\Delta}_{T^c}}_*),
\end{aligned}
\end{equation}
where the second inequality is due to the assumption that $\lambda\geq 8\omega\tau_1$ and the last inequality is from triangle inequality. Since $\alpha_1>\mu$ by assumption, one has by the former inequality that \eqref{eq-lemcone} holds.
\end{proof}

We are now ready to provide the recovery bound for any stationary point $\tilde{\Theta}\in \Omega$ of the nonconvex optimization problem \eqref{eq-esti}.

\begin{Theorem}\label{thm-stat}
Let $r, \omega$ be positive numbers such that $\Theta^*\in \Omega$ and satisfies \eqref{eq-rank}. Let $\tilde{\Theta}$ be a stationary point of the optimization problem \eqref{eq-esti}. Suppose that the nonconvex regularizer $\regu_\lambda$ satisfies Assumption \ref{asup-regu}, and that the empirical loss function $\loss_N$ satisfies the \emph{LSC} and \emph{RSC} conditions (cf. \eqref{eq-rsc1} and \eqref{eq-rsc2}) with $\min\{\alpha_1,2\alpha_2\}>\mu$.
Assume that $(\lambda, \omega)$ are chosen to satisfy
\begin{equation}\label{eq-lambda-sta}
\lambda\geq 2\max\{\normm{\nabla\loss_N(\Theta^*)}_\emph{op},4\omega\tau_1\},
\end{equation}
then we have that 
\begin{equation}\label{eq-l2bound}
\normm{\hat{\Theta}-\Theta^*}_\text{F}\leq \frac{\sqrt{r_1}}{2\alpha_2-\mu}\normm{\Pi_{\Aa_{J_1}}(\nabla\loss_N(\Theta^*))}_\emph{op}+\frac{5\sqrt{r_2}}{2(2\alpha_2-\mu)}\lambda,
\end{equation}
\end{Theorem}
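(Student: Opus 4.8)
Write $\tilde{\Delta}=\tilde{\Theta}-\Theta^*$ throughout (the conclusion is stated with $\hat{\Theta}$ but the object is a stationary point $\tilde{\Theta}$). The plan is to combine the first-order condition \eqref{1st-cond} with a symmetrized form of the RSC inequality \eqref{eq-rsc2}, reduce matters to a scalar inequality of the shape $(2\alpha_2-\mu)\normm{\tilde{\Delta}}_\text{F}^2\le c\,\normm{\tilde{\Delta}}_\text{F}$, and then estimate the constant $c$ by exploiting that the penalty $p_\lambda$ is flat on the large singular values of $\Theta^*$ (those indexed by $J_1$), so that perturbing them costs nothing at first order.

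First I would invoke Lemma \ref{lem-cone}: its hypotheses are exactly ours ($\alpha_1>\mu$, and \eqref{eq-lambda-sta} is its condition on $(\lambda,\omega)$), hence $\tilde{\Delta}\in\Cc$. Since negating a matrix does not change its singular values, $\Cc$ is symmetric, so \eqref{eq-rsc2} may be applied both at the base point $\Theta^*$ with increment $\tilde{\Delta}$ and at $\tilde{\Theta}$ with increment $-\tilde{\Delta}$; adding the two resulting inequalities produces the two-point bound $\inm{\nabla\loss_N(\tilde{\Theta})-\nabla\loss_N(\Theta^*)}{\tilde{\Delta}}\ge 2\alpha_2\normm{\tilde{\Delta}}_\text{F}^2$ (this symmetrization is needed precisely because \eqref{eq-rsc2} is a one-point statement). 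Next, \eqref{1st-cond} with $\Theta=\Theta^*$ gives $\inm{\nabla\loss_N(\tilde{\Theta})}{\tilde{\Delta}}\le\inm{\nabla\regu_\lambda(\tilde{\Theta})}{\Theta^*-\tilde{\Theta}}$, and the right-hand side is bounded exactly as in the derivation of \eqref{eq-lem3-2} — by \eqref{lem-qlambda-11} and \eqref{lem-qlambda-14} of Lemma \ref{lem-qlambda} for the $\Q_\lambda$ part and convexity of $\normm{\cdot}_*$ for the nuclear-norm part — by $\regu_\lambda(\Theta^*)-\regu_\lambda(\tilde{\Theta})+\mu\normm{\tilde{\Delta}}_\text{F}^2$. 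Combining these two facts yields the key intermediate inequality
\begin{equation*}
(2\alpha_2-\mu)\normm{\tilde{\Delta}}_\text{F}^2\ \le\ \regu_\lambda(\Theta^*)-\regu_\lambda(\tilde{\Theta})\ -\ \inm{\nabla\loss_N(\Theta^*)}{\tilde{\Delta}}.
\end{equation*}

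The main remaining task is to bound the right-hand side by $\big(\sqrt{r_1}\,\normm{\Pi_{\Aa_{J_1}}(\nabla\loss_N(\Theta^*))}_\text{op}+\tfrac52\sqrt{r_2}\,\lambda\big)\normm{\tilde{\Delta}}_\text{F}$, after which dividing by $\normm{\tilde{\Delta}}_\text{F}$ (the case $\tilde{\Delta}=0$ being trivial) gives \eqref{eq-l2bound}. For the loss term I would split $\tilde{\Delta}=\Pi_{\Aa_{J_1}}(\tilde{\Delta})+\big(\tilde{\Delta}-\Pi_{\Aa_{J_1}}(\tilde{\Delta})\big)$: since $\Pi_{\Aa_{J_1}}$ is the orthogonal projection onto $\Aa_{J_1}$, trace duality ($|\inm{A}{B}|\le\normm{A}_\text{op}\normm{B}_*$) together with $\text{rank}(\Pi_{\Aa_{J_1}}(\tilde{\Delta}))\le r_1$ and $\normm{\Pi_{\Aa_{J_1}}(\tilde{\Delta})}_\text{F}\le\normm{\tilde{\Delta}}_\text{F}$ controls the first piece by $\sqrt{r_1}\,\normm{\Pi_{\Aa_{J_1}}(\nabla\loss_N(\Theta^*))}_\text{op}\normm{\tilde{\Delta}}_\text{F}$, while the second piece is at most $\normm{\nabla\loss_N(\Theta^*)}_\text{op}\normm{\tilde{\Delta}-\Pi_{\Aa_{J_1}}(\tilde{\Delta})}_*\le\tfrac\lambda2\normm{\tilde{\Delta}-\Pi_{\Aa_{J_1}}(\tilde{\Delta})}_*$ by \eqref{eq-lambda-sta}. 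For the regularizer term I would use a refinement of the argument behind Lemma \ref{lem-decom}(ii): because $p_\lambda$ has vanishing derivative beyond the threshold in Assumption \ref{asup-regu}(iv), the singular values of $\Theta^*$ indexed by $J_1$ lie in the flat region of the penalty, so $\regu_\lambda(\Theta^*)-\regu_\lambda(\tilde{\Theta})$ is bounded by $\lambda\normm{\Pi_{\Aa_{J_2}}(\tilde{\Delta})}_*$ minus essentially $\lambda$ times the nuclear norm of the portion of $\tilde{\Delta}$ orthogonal to $\Aa_{J}$; this negative term cancels the surplus $\tfrac\lambda2\normm{\tilde{\Delta}-\Pi_{\Aa_{J_1}}(\tilde{\Delta})}_*$ up to a remainder proportional to $\normm{\Pi_{\Aa_{J_2}}(\tilde{\Delta})}_*\le\sqrt{r_2}\,\normm{\tilde{\Delta}}_\text{F}$, and the cone inequality $\normm{\tilde{\Delta}_{T^c}}_*\le 7\normm{\tilde{\Delta}_T}_*$ (with $T=\{1,\dots,2r\}$) converts any residual tail nuclear-norm terms into multiples of $\normm{\tilde{\Delta}_T}_*\le\sqrt{2r}\,\normm{\tilde{\Delta}}_\text{F}$, the factor $7$ being what finally yields the constant $\tfrac52$.

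I expect this last step — the sharp accounting of $\regu_\lambda(\Theta^*)-\regu_\lambda(\tilde{\Theta})$, separating the "$J_1$-flat, oracle-controlled" contribution from the "$J_2$, $\lambda$-controlled" contribution with the precise constants — to be the main obstacle; everything preceding it is a direct assembly of Lemmas \ref{lem-regu}--\ref{lem-cone} and the first-order condition \eqref{1st-cond}. A minor point to watch is the bookkeeping needed to pass from $\normm{\tilde{\Delta}-\Pi_{\Aa_{J_1}}(\tilde{\Delta})}_*$ to the $\Aa_{J_2}$-part plus a cone-controlled tail without losing the cancellation; a variant that works directly with the $\Delta_T/\Delta_{T^c}$ decomposition of Lemma \ref{lem-decom} (rather than with the subspaces $\Aa_{J_1},\Aa_{J_2}$) may be cleaner.
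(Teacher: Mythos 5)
The first half of your argument is sound and essentially parallels the paper: Lemma \ref{lem-cone} places $\tilde{\Delta}$ in the cone, the symmetrized RSC combined with the first-order condition \eqref{1st-cond} and the bound \eqref{eq-lem3-2} yields $(2\alpha_2-\mu)\normm{\tilde{\Delta}}_\text{F}^2\le\regu_\lambda(\Theta^*)-\regu_\lambda(\tilde{\Theta})-\inm{\nabla\loss_N(\Theta^*)}{\tilde{\Delta}}$, and your split $\tilde{\Delta}=\Pi_{\Aa_{J_1}}(\tilde{\Delta})+(\tilde{\Delta}-\Pi_{\Aa_{J_1}}(\tilde{\Delta}))$ correctly produces the $\sqrt{r_1}\,\normm{\Pi_{\Aa_{J_1}}(\nabla\loss_N(\Theta^*))}_\text{op}$ term for the loss. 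The gap is exactly the step you flag as the main obstacle: the sharp accounting of $\regu_\lambda(\Theta^*)-\regu_\lambda(\tilde{\Theta})$, and as proposed it does not go through. Working with function values, the flatness of $p_\lambda$ beyond $\nu$ cannot be converted into the statement that the $J_1$ singular values ``cost nothing at first order'': concavity gives $p_\lambda(\sigma_j(\tilde{\Theta}))\le p_\lambda(\sigma_j(\Theta^*))+p'_\lambda(\sigma_j(\Theta^*))(\sigma_j(\tilde{\Theta})-\sigma_j(\Theta^*))$, which is a \emph{lower} bound on $p_\lambda(\sigma_j(\Theta^*))-p_\lambda(\sigma_j(\tilde{\Theta}))$, not the upper bound you need; moreover the singular values of $\tilde{\Theta}$ do not align with those of $\Theta^*$ plus those of the projections of $\tilde{\Delta}$, so the bookkeeping relating $\normm{\tilde{\Delta}-\Pi_{\Aa_{J_1}}(\tilde{\Delta})}_*$, $\normm{\Pi_{\Aa_{J_2}}(\tilde{\Delta})}_*$ and $\normm{\tilde{\Delta}_{T^c}}_*$ (the last defined through the SVD of $\tilde{\Delta}$, not through the subspaces attached to $\Theta^*$) is not established. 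Your guess that the cone constant $7$ is what produces the $\tfrac52$ is also not how that constant arises.

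The paper avoids all of this by staying at the level of gradients and subgradients rather than passing to $\regu_\lambda(\Theta^*)-\regu_\lambda(\tilde{\Theta})$. It keeps the right-hand side in the form $\inm{\nabla\tilde{\loss}_N(\Theta^*)+\lambda G^*}{-\tilde{\Delta}}$ and chooses the specific subgradient $G^*=U^*{V^*}^\top+H^*$ with $H^*=-\lambda^{-1}\Pi_{\Bb_J}(\nabla\loss_N(\Theta^*))$. With this choice: (a) on $\Aa_{J_1}$ one has $\Pi_{\Aa_{J_1}}(\nabla\Q_\lambda(\Theta^*)+\lambda G^*)=0$ exactly because $q'_\lambda(\sigma_j(\Theta^*))+\lambda=p'_\lambda(\sigma_j(\Theta^*))=0$ for $j\in J_1$ by Assumption \ref{asup-regu}(iv), leaving only the oracle term; (b) on $\Aa_{J_2}$ the three operator-norm bounds $\normm{\Pi_{\Aa_{J_2}}(\nabla\Q_\lambda(\Theta^*))}_\text{op}\le\lambda$, $\normm{\Pi_{\Aa_{J_2}}(\lambda G^*)}_\text{op}\le\lambda$ and $\normm{\Pi_{\Aa_{J_2}}(\nabla\loss_N(\Theta^*))}_\text{op}\le\lambda/2$ sum to give the $\tfrac52\lambda\sqrt{r_2}$ term; (c) on $\Bb_J$ the contribution vanishes identically by the construction of $H^*$, so the cone inequality never enters the final accounting at all (it is needed only to legitimize the use of RSC). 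To repair your proof, retain $\inm{\nabla\Q_\lambda(\Theta^*)+\lambda G^*}{-\tilde{\Delta}}$ instead of the function-value difference (which \eqref{lem-qlambda-14} and convexity of the nuclear norm permit) and then carry out this projection argument.
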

\begin{proof}
Set $\tilde{\Delta}:=\tilde{\Theta}-\Theta^*$. It follows from Lemma \ref{lem-cone} that $\tilde{\Delta}\in \Cc$, and thus the RSC condition \eqref{eq-rsc2} is applicable for $\tilde{\Delta}$ and $\Theta^*$.
Combining the RSC condition \eqref{eq-rsc2} and Lemma \ref{lem-qlambda} (cf. \eqref{lem-qlambda-13}) that
\begin{align}
\tilde{\loss}_N(\tilde{\Theta})-\tilde{\loss}_N(\Theta^*)-\inm{\nabla\tilde{\loss}_N(\Theta^*)}{\tilde{\Delta}}&\geq \frac{2\alpha_2-\mu}{2}\normm{\tilde{\Delta}}_\text{F}^2,\label{eq-thm1-1}\\
\tilde{\loss}_N(\Theta^*)-\tilde{\loss}_N(\tilde{\Theta})+\inm{\nabla\tilde{\loss}_N(\tilde{\Theta})}{\tilde{\Delta}}&\geq \frac{2\alpha_2-\mu}{2}\normm{\tilde{\Delta}}_\text{F}^2.\label{eq-thm1-2}
\end{align}
Then by the convexity of $\normm{\cdot}_*$, one has that 
\begin{equation}\label{eq-thm1-3}
\lambda\normm{\tilde{\Theta}}_*\geq \lambda\normm{\Theta^*}_*+\lambda\inm{\tilde{\Delta}}{G^*},
\end{equation}
\begin{equation}\label{eq-thm1-4}
\lambda\normm{\Theta^*}_*\geq \lambda\normm{\tilde{\Theta}}_*-\lambda\inm{\tilde{\Delta}}{\tilde{G}},
\end{equation}
where $G^*\in \partial\normm{\Theta^*}_*$ and $\tilde{G}\in \partial\normm{\tilde{\Theta}}_*$. Combing \eqref{eq-thm1-1}-\eqref{eq-thm1-4}, we have that
\begin{equation}\label{eq-thm1-5}
(2\alpha_2-\mu)\normm{\tilde{\Delta}}_\text{F}^2\leq \inm{\nabla\tilde{\loss}_N(\tilde{\Theta})+\lambda \tilde{G}}{\tilde{\Delta}}-\inm{\nabla\tilde{\loss}_N(\Theta^*)+\lambda G^*}{\tilde{\Delta}}.
\end{equation}
Since $\tilde{\Theta}$ is a stationary point of \eqref{eq-esti}, one has by \eqref{1st-cond} that 
\begin{equation*}
\inm{\nabla\loss_N(\tilde{\Theta})+\nabla\regu_\lambda(\tilde{\Theta})}{\Theta^*-\tilde{\Theta}}\geq 0,
\end{equation*}
and thus 
\begin{equation*}
\inm{\nabla\tilde{\loss}_N(\tilde{\Theta})+\lambda\tilde{G}}{\tilde{\Delta}}\leq 0,
\end{equation*}
by the definitions of $\tilde{\loss}_N$ and $\regu_\lambda$. Combining the former inequality with \eqref{eq-thm1-5}, we arrive at that
\begin{equation*}
(2\alpha_2-\mu)\normm{\tilde{\Delta}}_\text{F}^2\leq \inm{\nabla\tilde{\loss}_N(\Theta^*)+\lambda G^*}{-\tilde{\Delta}}.
\end{equation*}
Recall the definitions of two subspaces given in \eqref{eq-sub} and set $S=J$ (cf. \eqref{eq-J}). The former inequality turns to 
\begin{equation}\label{eq-thm1-main}
(2\alpha_2-\mu)\normm{\tilde{\Delta}}_\text{F}^2\leq \inm{\Pi_{\Aa_J}(\nabla\tilde{\loss}_N(\Theta^*)+\lambda G^*)}{-\tilde{\Delta}}+\inm{\Pi_{\Bb_J}(\nabla\tilde{\loss}_N(\Theta^*)+\lambda G^*)}{-\tilde{\Delta}}.
\end{equation}
Moreover, recall the constant in Assumption \ref{asup-regu} (iv) and define the following two index set 
\begin{equation}\label{eq-muj}
J_1=\{j\big|\sigma_j(\Theta^*)\geq \mu\}\quad  \mbox{and}\quad J_2=\{j\big|0<\sigma_j(\Theta^*)<\mu\}.
\end{equation}
Then we have that 
\begin{equation}\label{eq-Pij}
\inm{\Pi_{\Aa_J}(\nabla\tilde{\loss}_N(\Theta^*)+\lambda G^*)}{-\tilde{\Delta}}=\inm{\Pi_{\Aa_{J_1}}(\nabla\tilde{\loss}_N(\Theta^*)+\lambda G^*)}{-\tilde{\Delta}}+\inm{\Pi_{\Aa_{J_2}}(\nabla\tilde{\loss}_N(\Theta^*)+\lambda G^*)}{-\tilde{\Delta}}
\end{equation}
Now set $H^*:=-\lambda^{-1}\Pi_{\Bb_J}(\nabla\loss_N(\Theta^*))$, and it holds that 
\begin{equation}\label{eq-partial}
G^*=U^*{V^*}^\top+H^*\in \partial\normm{\Theta^*}_*.
\end{equation}
In fact, since $\normm{\Pi_{\Bb_J}(\nabla\loss_N(\Theta^*))}_\text{op}\leq \normm{\nabla\loss_N(\Theta^*)}_\text{op}\leq \lambda$ by assumption, one has that $\normm{H^*}_\text{op}\leq 1$ and $H^*\in \Bb_J$ such that $H^*$ and $\Theta^*$ having orthogonal row/column space. 
With this specific choice of the subgradient of $\normm{\Theta^*}_*$, we shall bound the two terms on the right-hand side of \eqref{eq-Pij} and the last term of \eqref{eq-thm1-main}. First for an index set $S\subseteq \{1,2,\cdots,d\}$, let $Q'_\lambda(\sigma_S(\Theta^*))=(q'_\lambda(\sigma_j(\Theta^*)))^\top\ (j\in S)$ for notational simplicity. 


(i) 
Since $\nabla\regu_\lambda(\Theta^*)=\nabla\Q_\lambda(\Theta^*)+\lambda G^*=\nabla\Q_\lambda(\Theta^*)+\lambda(U^*{V^*}^\top+H^*)$ and $H^*\perp \Aa_{J_1}$, we have the following projection onto the subspace $\Aa_{J_1}$ as 
\begin{equation*}
\begin{aligned}
\Pi_{\Aa_{J_1}}(\nabla\regu_\lambda(\Theta^*))
&=U_{J_1}\text{mat}(Q'_\lambda(\sigma_{J_1}(\Theta^*)))V_{J_1}^\top+\lambda U_{J_1}V_{J_1}^\top\\
&=U_{J_1}[\text{mat}(Q'_\lambda(\sigma_{J_1}(\Theta^*)))+\lambda I_{J_1}]V_{J_1}^\top.
\end{aligned}
\end{equation*}
Then $\text{mat}(Q'_\lambda(\sigma_{J_1}(\Theta^*)))+\lambda I_{J_1}$ is a diagonal matrix with its diagonal elements equal to $q'_\lambda(\sigma_j(\Theta^*))+\lambda=p'_\lambda(\sigma_j(\Theta^*))=0$, where the last equality follows from Assumption \ref{asup-regu} (iv) and that $\sigma_j(\Theta^*)\geq \nu$ for $j\in J_1$. Hence one has that
$\Pi_{\Aa_{J_1}}(\nabla\regu_\lambda(\Theta^*))=0$, and thus
\begin{equation}\label{eq-thm1-8}
\begin{aligned}
\inm{\Pi_{\Aa_{J_1}}(\nabla\tilde{\loss}_N(\Theta^*)+\lambda G^*)}{-\tilde{\Delta}}&=\inm{\Pi_{\Aa_{J_1}}(\nabla\loss_N(\Theta^*))}{\Pi_{\Aa_{J_1}}(-\tilde{\Delta})}\\
&\leq \normm{\Pi_{\Aa_{J_1}}(\nabla\loss_N(\Theta^*))}_\text{op}\normm{\Pi_{\Aa_{J_1}}(-\tilde{\Delta})}_*\\
&\leq \sqrt{|J_1|}\normm{\Pi_{\Aa_{J_1}}(\nabla\loss_N(\Theta^*))}_\text{op}\normm{\Pi_{\Aa_{J_1}}(-\tilde{\Delta})}_\text{F}\\
&\leq \sqrt{|J_1|}\normm{\Pi_{\Aa_{J_1}}(\nabla\loss_N(\Theta^*))}_\text{op}\normm{\tilde{\Delta}}_\text{F},
\end{aligned}
\end{equation}
where the second inequality is from H{\"o}lder's inequality, and the third inequality is due to the fact that the rank of $\Pi_{\Aa_{J_1}}(\tilde{\Delta})$ is less than $|J_1|$. 

(ii) On the other hand, projecting $\nabla \Q_\lambda$ onto the subspace $\Aa_{J_2}$ yields that 
$$\Pi_{\Aa_{J_2}}(\nabla\Q_\lambda(\Theta^*))=U_{J_2}\text{mat}(Q'_\lambda(\sigma_{J_2}(\Theta^*)))(V_{J_2})^\top,$$
in which $\text{mat}(Q'_\lambda(\sigma_{J_2}(\Theta^*)))$ is a diagonal matrix with its diagonal elements satisfying $q'_\lambda(\sigma_j(\Theta^*))\leq \lambda$ due to Assumption \ref{asup-regu} (vi).
Hence one has that \begin{equation}\label{eq-thm1-9}
\normm{\Pi_{\Aa_{J_2}}(\nabla\Q_\lambda(\Theta^*))}_\text{op}\leq \lambda.
\end{equation} 
Then projecting $\lambda G^*$ onto the subspace $\Aa_{J_2}$ yields that
\begin{equation}\label{eq-thm1-10}
\normm{\Pi_{\Aa_{J_2}}(\lambda G^*)}_\text{op}\leq \normm{\lambda U^*{V^*}^\top}_\text{op}
\leq \lambda,
\end{equation} 
where the second equality is due to the facts that $\normm{U^*{V^*}^\top}_{\text{op}}=1$ and that $\normm{H^*}_\text{op}\leq 1$. Therefore, combing \eqref{eq-thm1-9} and \eqref{eq-thm1-10} with the fact that $\normm{\Pi_{\Aa_{J_2}}(\nabla\loss_N(\Theta^*))}_{\text{op}}\leq \normm{\nabla\loss_N(\Theta^*)}_{\text{op}}\leq \lambda/2$, we obtain that
\begin{equation}\label{eq-thm1-11}
\begin{aligned}
\inm{\Pi_{\Aa_{J_2}}(\nabla\tilde{\loss}_N(\Theta^*)+\lambda G^*)}{-\tilde{\Delta}}
&\leq \normm{\Pi_{\Aa_{J_2}}(\nabla\loss_N(\Theta^*)+\nabla\Q_\lambda(\Theta^*)+\lambda G^*}_{\text{op}}\normm{\Pi_{\Aa_{J_2}}(-\tilde{\Delta})}_*\\
&\leq \frac{5}{2}\lambda\normm{\Pi_{\Aa_{J_2}}(\tilde{\Delta})}_*\leq \frac{5}{2}\sqrt{|J_2|}\lambda\normm{\tilde{\Delta}}_\text{F}.
\end{aligned}
\end{equation}

(iii) By the definition of \eqref{eq-proj2}, one has that 
\begin{equation*}
\Pi_{\Bb_J}(\nabla\Q_\lambda(\Theta^*))=(\I_{d_1}-U^*{U^*}^\top)U^*\text{mat}(Q'_\lambda(\sigma(\Theta^*))){V^*}^\top(\I_{d_2}-V^*{V^*}^\top)=\textbf{0}.
\end{equation*}
Then it holds that 
\begin{equation}\label{eq-Pibj}
\begin{aligned}
\normm{\Pi_{\Bb_J}(\nabla\loss_N(\Theta^*)+\nabla\Q_\lambda(\Theta^*)+\lambda G^*)}_{\text{op}}&=\normm{\Pi_{\Bb_J}(\nabla\loss_N(\Theta^*)+\lambda (U^*{V^*}^\top+H^*))}_{\text{op}}\\
&=\normm{\Pi_{\Bb_J}(\nabla\loss_N(\Theta^*))-\Pi_{\Bb_J}(\nabla\loss_N(\Theta^*))}_{\text{op}}=0.
\end{aligned}
\end{equation}
where the first and second equalities are from the definitions of $G^*$ and $H^*$.
Combing \eqref{eq-Pibj} with H{\"o}lder's inequality that
\begin{equation}\label{eq-thm1-12}
\inm{\Pi_{\Bb_J}(\nabla\tilde{\loss}_N(\Theta^*)+\lambda G^*)}{-\tilde{\Delta}}=0.
\end{equation}

Combining \eqref{eq-thm1-8}, \eqref{eq-thm1-11} and \eqref{eq-thm1-12} with \eqref{eq-thm1-main} and \eqref{eq-Pij}, we arrive at that
\begin{equation*}
(2\alpha_2-\mu)\normm{\tilde{\Delta}}_\text{F}^2\leq \sqrt{|J_1|}\normm{\Pi_{\Aa_{J_1}}(\nabla\loss_N(\Theta^*))}_\text{op}\normm{\tilde{\Delta}}_\text{F}+\frac{5}{2}\sqrt{|J_2|}\lambda\normm{\tilde{\Delta}}_\text{F}.
\end{equation*}
Recall that $|J_1|=r_1$ and $|J_2|=r_2$, and we arrive at the conclusion.
\end{proof}

\begin{Remark}\label{rmk-stat}

{\rm (i)} Theorem \ref{thm-stat} provides a unified framework for low-rank matrix recovery via nonconvex optimization, and establishes the recovery bound for any stationary point of the general nonconvex estimator \eqref{eq-esti} scaling as $\normm{\hat{\Theta}-\Theta^*}_\text{F}=O(\sqrt{r_1}\normm{\Pi_{\Aa_{J_1}}(\nabla\loss_N(\Theta^*))}_\emph{op}+\sqrt{r_2}\lambda)$, with $r_1$ corresponding to the number of larger singular values of the true parameter $\Theta^*$ and $r_2$ referring to the number of smaller singular values. As long as $r_1>0$, this recovery bound is tighter than that of the nuclear norm regularization method in \cite{negahban2011estimation,Li2024LowrankME} thanks to the fact that $\normm{\Pi_{\Aa_{J_1}}(\nabla\loss_N(\Theta^*))}_\emph{op}$ is always of smaller scale than $\normm{\nabla\loss_N(\Theta^*)}_\emph{op}$, which will be clarified in the next section for specific errors-in-variables matrix regression. Furthermore, when all the singular values of $\Theta^*$ are all larger than $\nu$, i.e., $\sigma_r(\Theta^*)>\nu$, this general nonconvex estimator achieves the best-case  recovery bound scaling as $\normm{\hat{\Theta}-\Theta^*}_\text{F}=O(\sqrt{r}\normm{\Pi_{\Aa_{J_1}}(\nabla\loss_N(\Theta^*))}_\emph{op})$. 

{\rm (ii)} Note that due to the nonconvexity of the optimization problem \eqref{eq-esti}, it is always difficult to obtain a global solution. Nonetheless, our result here is not affected by this problem since the recovery bound is provided for all stationary points. Therefore, Theorem \ref{thm-stat} does not rely on any specific algorithm, implying that any numerical procedure can stably recover the true low-rank matrix as long as it is ensured to converge to a stationary point. This advantage is advantage is due to the LSC condition which requires that the loss function is strongly convex around a small neighbourhood of the true parameter. \cite{gui2015towards} also considered the problem of low-rank matrix recovery with nonconvex regularizers while the loss function is still convex. However, the result there are only applicable for a global solution whereas Theorem \ref{thm-stat} is much stronger and tighter for any stationary point and covers more general nonconvex regularizers beyond the nuclear norm.

{\rm (iii)} It is worthwhile to discuss the quantity $2\alpha_2-\mu$ appearing in the denominators of the recovery bounds in Theorem \ref{thm-stat}. One may think that the constants $\alpha_2$ and $\mu$ play opposite roles in the estimation method. Actually, $\alpha_1$ measures the curvature of the nonconvex loss function $\loss_N$ and $\mu$ measures the nonconvexity of the regularizer $\regu_\lambda$. Larger values of $\mu$ result in more severe nonconvexity of the regularizer and thus serve as an obstacle in estimation, while larger values of $\alpha_2$ can lead to a well-behaved estimator. These two constants interact together to balance the noncovexity of the estimator \eqref{eq-esti} and therefore, the requirement that $2\alpha_2>\mu$ is used to control this confrontational relationship and finally ensure a good performance of local optima. 
\end{Remark}

\section{Consequences for errors-in-variables matrix regression}\label{sec-conse}


In this section, we consider high-dimensional errors-in-variables matrix regression with noisy covariates as an application of the nonconvex regularized method. The matrix regression model has been first proposed in \cite{wainwright2014structured}. Fruitful results on both theoretical and applicational aspects have been developed in the last decade; see, e.g., \cite{alquier2013rank,candes2010power,negahban2011estimation,recht2010guaranteed} and references therein. Consider the following generic observation model
\begin{equation}\label{eq-matrix}
y_i=\langle\langle X_i,\Theta^* \rangle\rangle+\epsilon_i,\quad i=1,2,\cdots,N,
\end{equation}
where for two matrices $X_i, \Theta^*\in \R^{d_1\times d_2}$, their inner product is defined as $\langle\langle X_i,\Theta^* \rangle\rangle:=\text{trace}(\Theta^*X_i^\top)$, $\{y_i\}_{i=1}^N$ are observation values for the response variable, $\{X_i\}_{i=1}^N$ are covariate matrices, $\{\epsilon_i\}_{i=1}^N$ are observation noise, $\Theta^*$ is the unknown parameter matrix to be recover. Model \eqref{eq-matrix} can be written in a more compact form using operator-theoretic notations as follows 
\begin{equation}\label{eq-matrix-op}
y=\X(\Theta^*)+\epsilon,
\end{equation}
where $y:=(y_1,y_2,\cdots,y_N)^\top\in \R^N$ is the response vector, $\epsilon:=(\epsilon_1,\epsilon_2,\cdots,\epsilon_N)^\top\in \R^N$, the operator $\X:\R^{d_1\times d_2}\to \R^N$ is given by the covariate matrices $\{X_i\}_{i=1}^N$ as $[\X(\Theta^*)]_i=\langle\langle X_i,\Theta^* \rangle\rangle$.

In standard analysis, the observations $\{(X_i,y_i)\}_{i=1}^N$ are usually assumed to be correctly obtained. However, this assumption is neither reasonable nor realistic in many real applications, since the covariates are always with noise or miss values. Hence one can only observe the corrupted pairs $\{(Z_i,y_i)\}_{i=1}^N$ instead, where $Z_i$'s are noisy observations of the corresponding true $X_i$'s. The following two types of errors are considered:
\begin{enumerate}[(a)]
\item Additive noise: For each $i=1,2,\cdots,N$, we observe $Z_{i}=X_{i}+W_{i}$, with $W_{i}\in \R^{d_1\times d_2}$ being a random matrix.
\item Missing data: For each $i=1,2,\cdots,N$, we observe a random matrix $Z_{i}\in \R^{d_1\times d_2}$, where for each $j=1,2\cdots,d_1$, $k=1,2\cdots,d_2$, $({Z_i})_{jk}=({X_i})_{jk}$ with probability $1-\rho$, and $({Z_i})_{jk}=*$ with probability $\rho$, where $\rho\in [0,1)$.
\end{enumerate}
Throughout this section, a Gaussian-ensemble assumption is imposed on the errors-in-variables matrix regression model. In detail, for a matrix $X\in \R^{d_1\times d_2}$, define $M=d_1\times d_2$ and we  use $\text{vec}(X)\in \R^M$ to denote the vectorized form of the matrix $X$. The $\Sigma$-ensemble is defined as follows.
\begin{Definition}\label{def-sig}
Let $\Sigma\in \R^{M\times M}$ be a symmetric positive definite matrix. Then a matrix $X$ is said to be observed form the $\Sigma$-ensemble if $\emph{vec}(X)\sim \N(0,\Sigma)$.
\end{Definition}
Then for each $i=1,2,\cdots,N$, the true covariate matrix $X_i$ and the noisy matrix $W_i$ are assumed to be drawn from the $\Sigma_x$-ensemble and the $\Sigma_w$-ensemble, respectively. The observation noise $\epsilon$ is assumed to obey the Gaussian distribution with $\epsilon\sim \N(0,\sigma_\epsilon^2\I_N)$.

When there exists no measurement error, one naturally consider the following empirical loss function to recover the true parameter $\Theta^*$
\begin{equation}\label{eq-vec}
\loss_N(\Theta)=\frac{1}{2N}\|y-\X(\Theta)\|_2^2=\frac{1}{2N}\sum_{i=1}^N(y_i-\text{vec}(X_i)^\top\text{vec}(\Theta))^2.
\end{equation}
Define $\XX=(\text{vec}(X_1),\text{vec}(X_2),\cdots,\text{vec}(X_N))^\top\in \R^{N\times M}$. Then \eqref{eq-vec} can be written as
\begin{equation}\label{eq-vec}
\loss_N(\Theta)=\frac{1}{2N}\|y-\XX \text{vec}(\Theta)\|_2^2=\frac{1}{2N}\|y\|_2^2-\frac{1}{N}\langle \XX^\top y, \text{vec}(\Theta)\rangle+\frac{1}{2N}\langle \XX^\top \XX \text{vec}(\Theta), \text{vec}(\Theta)\rangle. 
\end{equation}
However, in the errors-in-variables case, the quantities $\frac{\XX^\top \XX}{N}$
and $\frac{\XX^\top y}{N}$ in \eqref{eq-vec} are both unknown, and thus the loss function does not work. Nonetheless, this loss function still provides us some heuristic. Specifically, given a collection of observations, the plug-in technique in \cite{loh2012high} is to find suitable estimates of the quantities $\frac{\XX^\top \XX}{N}$ and $\frac{\XX^\top Y}{N}$ adaptive to the cases of additive noise and/or missing data. 

Let $(\hat{\Gamma},\hat{\Upsilon})$ be estimators of $(\frac{\XX^\top \XX}{N},\frac{\XX^\top y}{N})$. Recall the nonconvex estimation method proposed in \ref{eq-esti}, we obtain the following estimator in the errors-in-variables case
\begin{equation}\label{eq-esti-error}
\hat{\Theta} \in \argmin_{\Theta\in \Omega\subseteq \R^{d_1\times d_2}}\left\{\frac{1}{2}\langle \hat{\Gamma}\text{vec}(\Theta), \text{vec}(\Theta)\rangle-\langle \hat{\Upsilon}, \text{vec}(\Theta)\rangle+\regu_\lambda(\Theta)\right\},
\end{equation}
where the surrogate loss function is 
\begin{equation}\label{eq-surloss}
\loss_N(\Theta)=\frac{1}{2}\langle \hat{\Gamma}\text{vec}(\Theta), \text{vec}(\Theta)\rangle-\langle \hat{\Upsilon}, \text{vec}(\Theta)\rangle.
\end{equation}

Define $\Z=(\text{vec}(Z_1),\text{vec}(Z_2),\cdots,\text{vec}(Z_N))^\top\in \R^{N\times M}$. For specific additive noise and missing data cases, an unbiased choice of the pair $(\hat{\Gamma},\hat{\Upsilon})$ is given respectively by
\begin{align}
\hat{\Gamma}_{\text{add}}&:=\frac{\Z^\top \Z}{N}-\Sigma_w \quad \mbox{and}\quad \hat{\Upsilon}_{\text{add}}:=\frac{\Z^\top y}{N}, \label{sur-add}\\
\hat{\Gamma}_{\text{mis}}&:=\frac{\tilde{\Z}^\top \tilde{\Z}}{N}-\rho\cdot\text{diag}\left(\frac{\tilde{\Z}^\top \tilde{\Z}}{N}\right) \quad \mbox{and}\quad \hat{\Upsilon}_{\text{mis}}:=\frac{\tilde{\Z}^\top y}{N}\quad \left(\tilde{\Z}=\frac{\Z}{1-\rho}\right). \label{sur-mis}
\end{align}

Under the high-dimensional scenario $(N\ll M)$, it is easy to check that the estimated matrices $\hat{\Gamma}_{\text{add}}$ and $\hat{\Gamma}_{\text{mis}}$ in \eqref{sur-add} and \eqref{sur-mis} are always negative definite; actually, $\hat{\Gamma}_{\text{add}}$ and $\hat{\Gamma}_{\text{mis}}$ are obtained via subtracting the positive definite matrices $\Sigma_{w}$ and $\rho\cdot \text{diag}\left(\frac{\tilde{\Z}^\top \tilde{\Z}}{N}\right)$ from the matrices $\Z^{\top}\Z$ and $\tilde{\Z}^{\top} \tilde{\Z}$ with rank at most $N$, respectively. Therefore, the loss function in \eqref{eq-surloss} is nonconvex and thus the estimator \eqref{eq-esti-error} is based on an optimization problem consisting of a loss function and a regularizer that are both nonconvex. Then the problem of recovering the parameter matrix in errors-in-variables matrix regression falls into the framework of this article.

In order to apply the theoretical result in the previous section, we need to verify that the LSC and RSC conditions \eqref{eq-rsc1} and \eqref{eq-rsc2} hold by the errors-in-variables matrix regression model with high probability. In addition, noting from \eqref{eq-lambda-sta} and \eqref{eq-l2bound}, we shall also bound the term $\normm{\nabla \loss_N(\Theta^*)}_\text{op}$ and $\normm{\Pi_{\Aa_{J_1}}(\nabla \loss_N(\Theta^*))}_\text{op}$ to provide some insight in choosing the regularization parameter and to show the smaller scale of the latter quantity. Specifically, Proposition \ref{prop-add} is for the additive noise case, while Proposition \ref{prop-mis} is for the missing data case.

To simplify the notations, let $\tilde{d}=\max\{d_2,d_2\}$ and define
\begin{align}
\tau_{\text{add}}&:= \lambda_{\text{min}}(\Sigma_x)\max\left\{\frac{(\normm{\Sigma_x}_\text{op}^2+\normm{\Sigma_w}_\text{op}^2}{\lambda^2_{\text{min}}(\Sigma_x)},1\right\},\label{tau-add}\\
\tau_{\text{mis}}&:= \lambda_{\text{min}}(\Sigma_x)\max\left\{\frac{1}{(1-\rho)^4}\frac{\normm{\Sigma_x}_\text{op}^4}{\lambda^2_{\text{min}}(\Sigma_x)},1\right\},\label{tau-mis}\\
\varphi_{\text{add}}&:=(\normm{\Sigma_x}_\text{op}+\normm{\Sigma_x}_\text{op})(\normm{\Sigma_x}_\text{op}+\sigma_\epsilon)\normm{\Theta^*}_\text{F},\label{deviation-add}\\
\varphi_{\text{mis}}&:=\frac{\normm{\Sigma_x}_\text{op}}{1-\rho}\left(\frac{\normm{\Sigma_x}_\text{op}}{1-\rho}+\sigma_\epsilon\right)\normm{\Theta^*}_\text{F}.\label{deviation-mis}
\end{align}
It is worthwhile to note that $\varphi_{\text{add}}$ and $\varphi_{\text{mis}}$ respectively serves as the surrogate error for errors-in-variables matrix regression and plays a key role in bounding the quantities $\normm{\nabla\loss_N(\Theta^*)}_\text{op}$ and $\normm{\Pi_{\Aa_{J_1}}(\nabla \loss_N(\Theta^*))}_\text{op}$.

\begin{Proposition}\label{prop-add}
In the additive noise case, let $\tau_{\emph{add}}$ and $\varphi_{\emph{add}}$ be defined as in \eqref{tau-add} and \eqref{deviation-add}, respectively. Then the following conclusions are true:\\
\emph{(i)} there exist universal positive constants $(c_0,c_1,c_2)$ such that with probability at least $1-c_1\exp\left(-c_2N\min\left\{\frac{\lambda^2_{\emph{min}}(\Sigma_x)}{(\normm{\Sigma_x}_\emph{op}^2+\normm{\Sigma_w}_\emph{op}^2)^2},1\right\}\right)$, the \emph{LSC} condition \eqref{eq-rsc1} holds with parameters
\begin{equation}\label{add-staRSC-para}
\alpha_1=\frac{1}{2}\lambda_{\emph{min}}(\Sigma_x),\ \emph{and}\ \tau_1=c_0\tau_{\emph{add}}\frac{\tilde{d}\log \tilde{d}}{N};
\end{equation}
\emph{(ii)} assume that $N\geq 4c_0\tau_{\emph{add}}\frac{\tilde{d}\log \tilde{d}}{N}$, then there exist universal positive constants $(c_0,c_1,c_2)$ such that with probability at least $1-c_1\exp\left(-c_2N\min\left\{\frac{\lambda^2_{\emph{min}}(\Sigma_x)}{(\normm{\Sigma_x}_\emph{op}^2+\normm{\Sigma_w}_\emph{op}^2)^2},1\right\}\right)$, the \emph{RSC} condition \eqref{eq-rsc2} holds with parameter
\begin{equation}\label{add-algRSC-para}
\alpha_2=\frac{1}{8}\lambda_{\emph{min}}(\Sigma_x);
\end{equation}
\emph{(iii)} there exist universal positive constants $(c_3,c_4,c_5)$ such that
\begin{equation*}
\begin{aligned}
\normm{\nabla\loss_N(\Theta^*)}_\emph{op}&\leq c_3\varphi_{\emph{add}}\sqrt{\frac{\log \tilde{d}}{N}},\\
\normm{\Pi_{\Aa_{J_1}}(\nabla\loss_N(\Theta^*))}_\emph{op}&\leq c_3\varphi_{\emph{add}}\sqrt{\frac{\log r_1}{N}}
\end{aligned}
\end{equation*}
hold with probability at least $1-c_4\exp(-c_5\tilde{d})$ and $1-c_4\exp(-c_5r_1)$, respectively.
\end{Proposition}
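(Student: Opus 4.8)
The plan is to exploit that the surrogate loss \eqref{eq-surloss} is an (indefinite) quadratic, so its gradient is characterized by $\text{vec}(\nabla\loss_N(\Theta))=\hat\Gamma_{\text{add}}\,\text{vec}(\Theta)-\hat\Upsilon_{\text{add}}$, and for every $\Delta\in\R^{d_1\times d_2}$ one has
\[
\inm{\nabla\loss_N(\Theta^*+\Delta)-\nabla\loss_N(\Theta^*)}{\Delta}=\text{vec}(\Delta)^\top\hat\Gamma_{\text{add}}\,\text{vec}(\Delta),
\]
while $\loss_N(\Theta+\Delta)-\loss_N(\Theta)-\inm{\nabla\loss_N(\Theta)}{\Delta}=\tfrac12\text{vec}(\Delta)^\top\hat\Gamma_{\text{add}}\,\text{vec}(\Delta)$. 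Using $Z_i=X_i+W_i$ with $X_i$, $W_i$, $\epsilon$ mutually independent, $\text{vec}(X_i)\sim\N(0,\Sigma_x)$ and $\text{vec}(W_i)\sim\N(0,\Sigma_w)$, one checks that $\E[\hat\Gamma_{\text{add}}]=\Sigma_x$ and $\E[\hat\Gamma_{\text{add}}\text{vec}(\Theta^*)-\hat\Upsilon_{\text{add}}]=0$, so the whole proposition reduces to concentration of $\hat\Gamma_{\text{add}}$ and of $\nabla\loss_N(\Theta^*)$ around their means.

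For (i) and (ii) the core step is the uniform deviation bound: on the asserted event,
\[
\bigl|\text{vec}(\Delta)^\top(\hat\Gamma_{\text{add}}-\Sigma_x)\text{vec}(\Delta)\bigr|\le\tfrac12\lambda_{\min}(\Sigma_x)\normm{\Delta}_\text{F}^2+c_0\tau_{\text{add}}\tfrac{\tilde d\log\tilde d}{N}\normm{\Delta}_*^2\quad\text{for all }\Delta.
\]
To prove this I would first fix $\Delta$ and write $\text{vec}(\Delta)^\top(\hat\Gamma_{\text{add}}-\Sigma_x)\text{vec}(\Delta)=\tfrac1N\sum_{i=1}^N\bigl(\langle\text{vec}(Z_i),\text{vec}(\Delta)\rangle^2-\E[\cdot]\bigr)$, an average of i.i.d.\ centered sub-exponential variables with parameter of order $(\normm{\Sigma_x}_\text{op}+\normm{\Sigma_w}_\text{op})\normm{\Delta}_\text{F}^2$ (since $\langle\text{vec}(Z_i),\text{vec}(\Delta)\rangle$ is Gaussian with variance $\le\normm{\Sigma_x+\Sigma_w}_\text{op}\normm{\Delta}_\text{F}^2$), so Bernstein's inequality yields a pointwise tail; then upgrade to a supremum over Frobenius-unit matrices of rank $\le k$ using that their $\varepsilon$-covering number is at most $\exp(ck\tilde d\log(1/\varepsilon))$; and finally run a peeling argument over dyadic ranks, combined with $\normm{\Delta}_*\le\sqrt k\,\normm{\Delta}_\text{F}$ for matrices of rank $\le k$, to pass to all $\Delta$ and produce the $\normm{\Delta}_*^2$ correction with the $\tilde d\log\tilde d/N$ rate. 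This is the matrix analogue of the noisy-design deviation inequalities of Loh--Wainwright. Claim (i) then follows by adding $\text{vec}(\Delta)^\top\Sigma_x\text{vec}(\Delta)\ge\lambda_{\min}(\Sigma_x)\normm{\Delta}_\text{F}^2$ and keeping a factor $\tfrac12$, with $\alpha_1=\tfrac12\lambda_{\min}(\Sigma_x)$ and $\tau_1=c_0\tau_{\text{add}}\tilde d\log\tilde d/N$. For (ii), the RSC left-hand side equals $\tfrac12\text{vec}(\Delta)^\top\hat\Gamma_{\text{add}}\text{vec}(\Delta)$; on the cone $\Cc$ we have $\normm{\Delta}_*\le8\normm{\Delta_T}_*\le8\sqrt{2r}\,\normm{\Delta}_\text{F}$, hence $\normm{\Delta}_*^2\le128\,r\,\normm{\Delta}_\text{F}^2$, so the stated sample-size hypothesis makes the $\tau_1\normm{\Delta}_*^2$ term at most $\tfrac14\lambda_{\min}(\Sigma_x)\normm{\Delta}_\text{F}^2$; combined with (i) this gives $\tfrac12\text{vec}(\Delta)^\top\hat\Gamma_{\text{add}}\text{vec}(\Delta)\ge\tfrac18\lambda_{\min}(\Sigma_x)\normm{\Delta}_\text{F}^2$, i.e.\ $\alpha_2=\tfrac18\lambda_{\min}(\Sigma_x)$.

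For (iii), substituting $y=\XX\text{vec}(\Theta^*)+\epsilon$ and $\Z=\XX+\W$ (with $\W=(\text{vec}(W_1),\dots,\text{vec}(W_N))^\top$) into $\hat\Gamma_{\text{add}}\text{vec}(\Theta^*)-\hat\Upsilon_{\text{add}}$ and cancelling the $\XX^\top\XX\text{vec}(\Theta^*)$ and $\W^\top\XX\text{vec}(\Theta^*)$ contributions, one finds that $\nabla\loss_N(\Theta^*)$ is the matrix whose vectorization is
\[
\Bigl(\tfrac{\W^\top\W}{N}-\Sigma_w\Bigr)\text{vec}(\Theta^*)+\tfrac{\XX^\top\W}{N}\text{vec}(\Theta^*)-\tfrac{\XX^\top\epsilon}{N}-\tfrac{\W^\top\epsilon}{N},
\]
a sum of four zero-mean random matrices. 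I would bound each operator norm through the variational form $\normm{A}_\text{op}=\sup\{u^\top Av:\|u\|_2=\|v\|_2=1\}$: for fixed unit $u,v$ the scalar $u^\top(\cdot)v$ is a (product of) Gaussian(s) whose sub-exponential parameter is $O(\varphi_{\text{add}}/\sqrt N)$ once $\normm{\Sigma_x}_\text{op}$, $\normm{\Sigma_w}_\text{op}$, $\sigma_\epsilon$ and $\normm{\Theta^*}_\text{F}$ are used, so Bernstein plus a $\tfrac14$-net of the spheres in $\R^{d_1}$ and $\R^{d_2}$ (cardinality $e^{c\tilde d}$) gives $\normm{\nabla\loss_N(\Theta^*)}_\text{op}\le c_3\varphi_{\text{add}}\sqrt{\log\tilde d/N}$ with probability at least $1-c_4e^{-c_5\tilde d}$. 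For the projected quantity, $\Pi_{\Aa_{J_1}}(\nabla\loss_N(\Theta^*))=U_{J_1}^*{U_{J_1}^*}^\top\nabla\loss_N(\Theta^*)V_{J_1}^*{V_{J_1}^*}^\top$, so its operator norm is a supremum of $u^\top\nabla\loss_N(\Theta^*)v$ over unit vectors $u\in\text{col}(U_{J_1}^*)$ and $v\in\text{col}(V_{J_1}^*)$, each an $r_1$-dimensional sphere with net cardinality $e^{cr_1}$; rerunning the argument replaces $\log\tilde d$ by $\log r_1$, which is precisely what makes this term of smaller order and accounts for the gap with the convex nuclear-norm method.

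The main obstacle is the uniform-in-$\Delta$ deviation inequality driving (i): marrying the pointwise Bernstein concentration to the low-rank metric-entropy estimate and a peeling over rank scales so that the residual is controlled purely by $\normm{\Delta}_\text{F}$ and $\normm{\Delta}_*$ with the precise constant $\tau_{\text{add}}$ and the $\tilde d\log\tilde d/N$ scaling; the sub-exponential (rather than sub-Gaussian) tails coming from the $\W^\top\W$ block are what force the extra $\log\tilde d$ factor and demand care. Once that lemma is established, the cancellations in (iii), the cone estimate in (ii), and the net/Bernstein arguments for the operator norms are routine, the only further subtlety being to verify that vectorization does not distort the operator-norm bounds when passing between $\R^{d_1\times d_2}$ and $\R^{M}$ (it does not, since $\|\text{vec}(\cdot)\|_2=\normm{\cdot}_\text{F}$ and the surrogates $\hat\Gamma,\hat\Upsilon$ are themselves defined through $\text{vec}$).
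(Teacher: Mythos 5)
Your overall reduction --- the loss is quadratic, so LSC/RSC reduce to a uniform deviation bound for $\text{vec}(\Delta)^\top(\hat\Gamma_{\text{add}}-\Sigma_x)\text{vec}(\Delta)$, and part (iii) reduces to concentration of the zero-mean blocks of $\hat\Gamma_{\text{add}}\text{vec}(\Theta^*)-\hat\Upsilon_{\text{add}}$ --- is exactly the paper's. The difference is that the paper does not prove these concentration statements: it imports Lemmas 1 and 2 of Loh--Wainwright (2012) for the vectorized quantities and then converts $\|\text{vec}(\Delta)\|_1$ and $\|\cdot\|_\infty$ into matrix norms, whereas you propose to rebuild matrix-native versions from scratch (Bernstein plus low-rank covering numbers plus peeling for (i); $\varepsilon$-nets of the unit spheres for (iii)). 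For (ii) your route is in fact cleaner than the paper's: you use the cone constraint to get $\normm{\Delta}_*\le 8\sqrt{2r}\,\normm{\Delta}_\text{F}$ on $\Cc$, which is the natural way to absorb the $\tau_1\normm{\Delta}_*^2$ correction, while the paper's own proof of (ii) never uses the restriction to $\Cc$ and instead applies the crude bound $\|\text{vec}(\Delta)\|_1\le\sqrt{M}\,\normm{\Delta}_\text{F}$.

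There are two points to flag. First, in (iii) your net argument cannot deliver the stated rate: a union bound over nets of the spheres in $\R^{d_1}$ and $\R^{d_2}$ of cardinality $e^{c\tilde d}$ forces the deviation level to be of order $\varphi_{\text{add}}\sqrt{\tilde d/N}$ (the per-point tail $\exp(-c'Nt^2/\varphi_{\text{add}}^2)$ must beat the cardinality $e^{c\tilde d}$), not $\varphi_{\text{add}}\sqrt{\log\tilde d/N}$ as claimed. The paper reaches the $\sqrt{\log\tilde d/N}$ rate only by identifying $\normm{\nabla\loss_N(\Theta^*)}_\text{op}$ with the entrywise $\ell_\infty$ norm of $\hat\Gamma_{\text{add}}\text{vec}(\Theta^*)-\hat\Upsilon_{\text{add}}$ and citing the elementwise deviation lemma; that identification is itself dubious, but it is the route the paper takes, and your honest operator-norm argument lands on a genuinely larger bound. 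Second, the uniform-in-$\Delta$ deviation inequality driving (i) is the entire technical content of the proposition, and you only sketch the Bernstein/entropy/peeling strategy; the paper sidesteps this by citation, so to make your self-contained version complete you would actually have to execute that argument and verify it produces the constant $\tau_{\text{add}}$ and the $\tilde d\log\tilde d/N$ scaling.
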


\begin{proof}
\rm{(i)} Note that the matrices $X$ and $W$ are sub-Gaussian with parameters $(\Sigma_x,\normm{\Sigma_x}_\text{op}^2)$ and $(\Sigma_w,\normm{\Sigma_w}_\text{op}^2)$, respectively \cite{loh2012supplementaryMH}. Then \cite[Lemma 1]{loh2012high} is applicable to concluding that, there exist universal constants $(c_0,c_1,c_2)$ such that for any $\beta\in \R^M$
\begin{align}
\beta^\top\hat\Gamma_{\text{add}}\beta&\geq \frac{1}{2}\lambda_{\text{min}}(\Sigma_x)\|\beta\|_2^2-c_0\tau_{\text{add}}\frac{\log M}{N}\|\beta\|_1^2,\label{add-RSC1}
\end{align}
with probability at least $1-c_1\exp\left(-c_2N\min\left\{\frac{\lambda^2_{\text{min}}(\Sigma_x)}{(\normm{\Sigma_x}_\text{op}^2+\normm{\Sigma_w}_\text{op}^2)^2},1\right\}\right)$. Let $\Theta^*$ be the true parameter matrix (i.e., $\Theta^*$ satisfies \eqref{eq-matrix-op}). Fix $\Delta\in \R^{d_1\times d_2}$ such that $\Theta^*+\Delta \in \Su$. In the additive noise case, it holds that $\inm{\nabla\loss_N(\Theta^*+\Delta)-\nabla\loss_N(\Theta^*)}{\Delta}=\text{vec}(\Delta)^\top\hat\Gamma_{\text{add}}\text{vec}(\Delta)$.
Now with $\text{vec}(\Delta)$ in palce of $\beta$ and noting the facts that $\|\text{vec}(\Delta)\|_2=\normm{\Delta}_\text{F}$ and that $\|\text{vec}(\Delta)\|_1\leq \sqrt{M}\|\text{vec}(\Delta)\|_1=\sqrt{M}\normm{\Delta}_\text{F}$, one has by \eqref{add-RSC1} that the LSC condition \eqref{eq-rsc1} hold with high probability with parameters given by \eqref{add-staRSC-para} due to the facts that $M=d_1*d_2$ and $\tilde{d}=\max\{d_1,d_2\}$.

\rm{(ii)} On the other hand, for any $\Theta,\Theta'\in \R^{d_1\times d_2}$, note that $\loss_N(\Theta+\Delta)-\loss_N(\Theta)-\inm{\nabla\loss_N(\Theta)}{\Delta}=\frac{1}{2}\text{vec}(\Delta)^\top\hat\Gamma_{\text{add}}\text{vec}(\Delta)$. Then with $\text{vec}(\Delta)$ in place of $\beta$ and noting the facts that $\|\text{vec}(\Delta)\|_2=\normm{\Delta}_\text{F}$ and that $\|\text{vec}(\Delta)\|_1\leq \sqrt{M}\|\text{vec}(\Delta)\|_2=\sqrt{M}\normm{\Delta}_\text{F}$,
one has that
\begin{equation*}
\loss_N(\Theta+\Delta)-\loss_N(\Theta)-\inm{\nabla\loss_N(\Theta)}{\Delta}\geq \frac{1}{4}\lambda_{\text{min}}(\Sigma_x)\normm{\Delta}_{\text{F}}^2-\frac{c_0}{2}\tau_{\text{add}}\frac{\sqrt{M}\log M}{N}\normm{\Delta}_{\text{F}}^2.
\end{equation*}
Combining this inequality with the assumption that $N\geq 4c_0\tau_{\text{add}}\frac{\tilde{d}\log \tilde{d}}{N}$, we obtain that the RSC condition \eqref{eq-rsc2} holds with high probability with parameters given by \eqref{add-algRSC-para} due to the facts that $M=d_1*d_2$ and $\tilde{d}=\max\{d_1,d_2\}$. 

\rm{(iii)} It follows from simple calculation that
\begin{equation*}
\normm{\nabla\loss_N(\Theta^*)}_\text{op}=\|\hat{\Gamma}_{\text{add}}\text{vec}(\Theta^*)-\hat{\Upsilon}_{\text{add}}\|_\infty.
\end{equation*}
Thus \cite[Lemma 2]{loh2012high} is applicable to concluding that, there exist universal positive constants $(c_3,c_4,c_5)$ such that
\begin{equation*}
\normm{\nabla\loss_n(\Theta^*)}_\text{op}\leq c_3\varphi_{\text{add}}\sqrt{\frac{\log \tilde{d}}{N}},
\end{equation*}
holds with probability at least $1-c_4\exp(-c_5\tilde{d})$.

Then by the definition of $\Pi_{\Aa_{J_1}}$ (cf. \eqref{eq-proj1}), one has that 
\begin{equation*}
\normm{\Pi_{\Aa_{J_1}}(\nabla\loss_N(\Theta^*))}_\text{op}=\normm{U_{J_1}^*{U_{J_1}^*}^\top\nabla\loss_N(\Theta^*)V_{J_1}^*{V_{J_1}^*}^\top}_\text{op}=\normm{U_{J_1}^*\nabla\loss_N(\Theta^*)V_{J_1}^*}_\text{op}.
\end{equation*}
Since $U_{J_1}^*\nabla\loss_N(\Theta^*)V_{J_1}^*\in \R^{{r_1}\times {r_1}}$, it follows from \cite[Lemma 2]{loh2012high} that, there exist universal positive constants $(c_3,c_4,c_5)$ such that
\begin{equation*}
\normm{\Pi_{\Aa_{J_1}}(\nabla\loss_N(\Theta^*))}_\text{op}\leq c_3\varphi_{\text{add}}\sqrt{\frac{\log r_1}{N}},
\end{equation*}
holds with probability at least $1-c_4\exp(-c_5r_1)$.
The proof is complete.
\end{proof}

The proof of Proposition \ref{prop-mis} is similar to that of Proposition \ref{prop-add} with the modification that \cite[Lemma 3 and Lemma 4]{loh2012high} is applied instead of \cite[Lemma 1 and Lemma 2]{loh2012high}, and so is omitted.

\begin{Proposition}\label{prop-mis}
In the missing data case,  let $\tau_{\emph{mis}}$ and $\varphi_{\text{mis}}$ be defined as in \eqref{tau-mis} and \eqref{deviation-mis}, respectively. Then the following conclusions are true:\\
\emph{(i)} there exist universal positive constants $(c_0,c_1,c_2)$ such that with probability at least $1-c_1\exp\left(-c_2N\min\left\{(1-\rho)^4\frac{\lambda^2_{\emph{min}}(\Sigma_x)}{\normm{\Sigma_x}_\emph{op}^4},1\right\}\right)$, the \emph{LSC} condition \eqref{eq-rsc1} holds with parameters
\begin{equation*}
\alpha_1=\frac{1}{2}\lambda_{\emph{min}}(\Sigma_x),\ \emph{and}\ \tau_1=c_0\tau_{\emph{mis}}\frac{\tilde{d}\log \tilde{d}}{N};
\end{equation*}
\emph{(ii)} assume that $N\geq 4c_0\tau_{\emph{mis}}\frac{\tilde{d}\log \tilde{d}}{N}$, then there exist universal positive constants $(c_0,c_1,c_2)$ such that with probability at least $1-c_1\exp\left(-c_2N\min\left\{(1-\rho)^4\frac{\lambda^2_{\emph{min}}(\Sigma_x)}{\normm{\Sigma_x}_\emph{op}^4},1\right\}\right)$, the \emph{RSC} condition \eqref{eq-rsc2} holds with parameter
\begin{equation*}
\alpha_2=\frac{1}{8}\lambda_{\emph{min}}(\Sigma_x);
\end{equation*}
\emph{(ii)} there exist universal positive constants $(c_3,c_4,c_5)$ such that
\begin{equation*}
\begin{aligned}
\normm{\nabla\loss_N(\Theta^*)}_\emph{op}&\leq c_3\varphi_{\emph{mis}}\sqrt{\frac{\log \tilde{d}}{N}},\\
\normm{\Pi_{\Aa_{J_1}}(\nabla\loss_N(\Theta^*))}_\emph{op}&\leq c_3\varphi_{\emph{mis}}\sqrt{\frac{\log r_1}{N}}.
\end{aligned}
\end{equation*}
holds with probability at least $1-c_4\exp(-c_5\tilde{d})$ and $1-c_4\exp(-c_5r_1)$, respectively.
\end{Proposition}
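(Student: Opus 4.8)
The plan is to reproduce the proof of Proposition~\ref{prop-add} almost verbatim, replacing the additive-noise surrogate $(\hat{\Gamma}_{\text{add}},\hat{\Upsilon}_{\text{add}})$ of \eqref{sur-add} by the missing-data surrogate $(\hat{\Gamma}_{\text{mis}},\hat{\Upsilon}_{\text{mis}})$ of \eqref{sur-mis}, and invoking \cite[Lemma~3 and Lemma~4]{loh2012high} wherever the additive-noise argument used \cite[Lemma~1 and Lemma~2]{loh2012high}. The only genuinely new bookkeeping is that, since $\tilde{\Z}=\Z/(1-\rho)$, every occurrence of $\Sigma_x$ in the additive-noise constants is rescaled by a power of $(1-\rho)^{-1}$: this is exactly what is encoded in the definitions \eqref{tau-mis} and \eqref{deviation-mis} of $\tau_{\text{mis}}$ and $\varphi_{\text{mis}}$, and it is also the source of the $(1-\rho)^4$ appearing inside the probability exponents.

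For parts (i) and (ii), I would first record that in the missing-data model the rows of $\XX$ are sub-Gaussian with parameters $(\Sigma_x,\normm{\Sigma_x}_{\text{op}}^2)$ and that $\hat{\Gamma}_{\text{mis}}$ of \eqref{sur-mis} is the corresponding unbiased surrogate for $\Sigma_x$ — including the diagonal correction $\rho\cdot\text{diag}(\tilde{\Z}^\top\tilde{\Z}/N)$, which is handled by \cite[Lemma~3]{loh2012high}. That lemma then yields, with the stated probability, a lower restricted-eigenvalue bound
\begin{equation*}
\beta^\top\hat{\Gamma}_{\text{mis}}\beta\geq \tfrac{1}{2}\lambda_{\text{min}}(\Sigma_x)\|\beta\|_2^2-c_0\tau_{\text{mis}}\tfrac{\log M}{N}\|\beta\|_1^2,\qquad \forall\,\beta\in\R^M.
\end{equation*}
Specializing $\beta=\text{vec}(\Delta)$, using $\|\text{vec}(\Delta)\|_2=\normm{\Delta}_{\text{F}}$ and $\|\text{vec}(\Delta)\|_1\leq\sqrt{M}\,\normm{\Delta}_{\text{F}}$ with $M=d_1d_2\leq\tilde{d}^2$, and invoking the identity $\inm{\nabla\loss_N(\Theta^*+\Delta)-\nabla\loss_N(\Theta^*)}{\Delta}=\text{vec}(\Delta)^\top\hat{\Gamma}_{\text{mis}}\text{vec}(\Delta)$ turns this into the LSC inequality \eqref{eq-rsc1} with $\alpha_1=\tfrac{1}{2}\lambda_{\text{min}}(\Sigma_x)$ and $\tau_1$ of the claimed order; the parallel identity $\loss_N(\Theta+\Delta)-\loss_N(\Theta)-\inm{\nabla\loss_N(\Theta)}{\Delta}=\tfrac{1}{2}\text{vec}(\Delta)^\top\hat{\Gamma}_{\text{mis}}\text{vec}(\Delta)$, together with the sample-size assumption of part (ii), absorbs the deviation term and delivers the RSC inequality \eqref{eq-rsc2} with $\alpha_2=\tfrac{1}{8}\lambda_{\text{min}}(\Sigma_x)$.

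For part (iii), I would use that $\nabla\loss_N(\Theta^*)$ is the matrix whose vectorization equals $\hat{\Gamma}_{\text{mis}}\text{vec}(\Theta^*)-\hat{\Upsilon}_{\text{mis}}$, so that its operator norm is precisely the quantity bounded by (the matrix form of) \cite[Lemma~4]{loh2012high}, giving $\normm{\nabla\loss_N(\Theta^*)}_{\text{op}}\leq c_3\varphi_{\text{mis}}\sqrt{\log\tilde{d}/N}$ with probability at least $1-c_4\exp(-c_5\tilde{d})$. For the sharper bound, I would use \eqref{eq-proj1} to write $\Pi_{\Aa_{J_1}}(\nabla\loss_N(\Theta^*))=U_{J_1}^*{U_{J_1}^*}^\top\nabla\loss_N(\Theta^*)V_{J_1}^*{V_{J_1}^*}^\top$ and observe that its operator norm equals $\normm{{U_{J_1}^*}^\top\nabla\loss_N(\Theta^*)V_{J_1}^*}_{\text{op}}$, the operator norm of an $r_1\times r_1$ matrix; applying \cite[Lemma~4]{loh2012high} to this lower-dimensional object makes the union bound run over a set controlled by $r_1$ rather than $\tilde{d}$, producing $\normm{\Pi_{\Aa_{J_1}}(\nabla\loss_N(\Theta^*))}_{\text{op}}\leq c_3\varphi_{\text{mis}}\sqrt{\log r_1/N}$ with probability at least $1-c_4\exp(-c_5r_1)$.

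The bulk of the argument is thus a translation of Proposition~\ref{prop-add}; the only real content — that the missing-data surrogate $\hat{\Gamma}_{\text{mis}}$ is sub-Gaussian with the rescaled curvature/deviation parameters and that $\hat{\Gamma}_{\text{mis}}\text{vec}(\Theta^*)-\hat{\Upsilon}_{\text{mis}}$ concentrates at the claimed rate — is exactly the content of \cite[Lemmas~3 and 4]{loh2012high}. The step I would be most careful about is propagating the $(1-\rho)^{-1}$ scaling consistently: it enters $\varphi_{\text{mis}}$ linearly through $\hat{\Upsilon}_{\text{mis}}$ and $\hat{\Gamma}_{\text{mis}}\text{vec}(\Theta^*)$, enters $\tau_{\text{mis}}$ and the probability exponent quartically through the deviation constant of $\hat{\Gamma}_{\text{mis}}$, and one must also check — as in the additive case — that the diagonal correction term does not spoil the sub-Gaussian bookkeeping underlying \cite[Lemma~3]{loh2012high}.
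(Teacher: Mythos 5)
Your proposal is correct and matches the paper exactly: the authors omit the proof of Proposition~\ref{prop-mis}, stating only that it follows that of Proposition~\ref{prop-add} with \cite[Lemmas~3 and 4]{loh2012high} substituted for \cite[Lemmas~1 and 2]{loh2012high}, which is precisely the translation you carry out (including the $(1-\rho)$ rescaling absorbed into $\tau_{\text{mis}}$, $\varphi_{\text{mis}}$, and the probability exponents). Your write-up is in fact more explicit than the paper's, and the steps you flag as delicate (the diagonal correction and the propagation of the $(1-\rho)^{-1}$ factors) are indeed the only points requiring care.
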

\begin{Remark}\label{rmk-prop}
It is obvious to see that the scale of $\normm{\Pi_{\Aa_{J_1}}(\nabla\loss_N(\Theta^*))}_\emph{op}$ is smaller than that of $\normm{(\nabla\loss_N(\Theta^*))}_\emph{op}$ in both additive noise and missing data cases. Hence the recovery bound of nonconvex regularized method can be tighter than that of the nuclear norm regularized method.
\end{Remark}

With these two propositions, we are now ready to provide the probabilistic consequences for errors-in-variables matrix regression. Corollary \ref{corol-add} is for the additive noise case, while Corollary \ref{corol-mis} is for the missing data case. The proofs mainly follow by applying Propositions \ref{prop-add} and \ref{prop-mis} on Theorem \ref{thm-stat} with some elementary probability theory, respectively, and so are omitted.

\begin{Corollary}\label{corol-add}
Let $r, \omega$ be positive numbers such that $\Theta^*\in \Omega$ and satisfies \eqref{eq-rank}. Let $\tilde{\Theta}$ be a stationary point of the optimization problem \eqref{eq-esti} with $\loss_N$ given by \eqref{eq-surloss} and $(\hat{\Gamma}_{\emph{add}},\hat{\Upsilon}_{\emph{add}})$ in place of $(\hat{\Gamma},\hat{\Upsilon})$. Suppose that the nonconvex regularizer $\regu_\lambda$ satisfies Assumption \ref{asup-regu}, and that $\frac{1}{4}\lambda_{\emph{min}}(\Sigma_x)>\mu$. Then there exist universal positive constants $c_i\ (i=0,1,2,3,4,5)$ such that, if $(\lambda,\omega)$ are chosen to satisfy
\begin{equation}\label{eq-lambda-sta}
\lambda\geq c_0\max\left\{\varphi_{\emph{add}}\sqrt{\frac{\log \tilde{d}}{N}},\omega\tau_{\emph{add}}\frac{\tilde{d}\log \tilde{d}}{N}\right\},
\end{equation}
and the sample size satisfies $N\geq 4c_0\tau_{\emph{add}}\frac{\tilde{d}\log \tilde{d}}{N}$,
then then it holds with probability at least $1-c_1\exp\left(-c_2N\min\left\{\frac{\lambda^2_{\emph{min}}(\Sigma_x)}{(\normm{\Sigma_x}_\emph{op}^2+\normm{\Sigma_w}_\emph{op}^2)^2},1\right\}\right)-c_3\exp(-c_4\tilde{d})$ that 
\begin{equation}\label{eq-l2bound}
\normm{\hat{\Theta}-\Theta^*}_\emph{F}\leq \frac{c_5\varphi_{\emph{add}}}{2\alpha_2-\mu}\left(\sqrt{\frac{r_1\log r_1}{N}}+\sqrt{\frac{r_2\tilde{d}\log \tilde{d}}{N}}\right).
\end{equation}
\end{Corollary}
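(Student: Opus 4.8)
The plan is to derive Corollary \ref{corol-add} as a direct specialization of Theorem \ref{thm-stat}, feeding into it the concrete quantities supplied by Proposition \ref{prop-add}. First I would invoke the three parts of Proposition \ref{prop-add} and intersect the associated high-probability events into a single event $\mathcal{E}$ by a union bound: on $\mathcal{E}$, the surrogate loss $\loss_N$ of \eqref{eq-surloss} (built from $(\hat\Gamma_{\text{add}},\hat\Upsilon_{\text{add}})$) satisfies the \emph{LSC} condition \eqref{eq-rsc1} with $\alpha_1=\tfrac12\lambda_{\min}(\Sigma_x)$ and $\tau_1=c_0\tau_{\text{add}}\tfrac{\tilde d\log\tilde d}{N}$, the \emph{RSC} condition \eqref{eq-rsc2} with $\alpha_2=\tfrac18\lambda_{\min}(\Sigma_x)$, and the operator-norm bounds $\normm{\nabla\loss_N(\Theta^*)}_{\text{op}}\le c_3\varphi_{\text{add}}\sqrt{\log\tilde d/N}$ and $\normm{\Pi_{\Aa_{J_1}}(\nabla\loss_N(\Theta^*))}_{\text{op}}\le c_3\varphi_{\text{add}}\sqrt{\log r_1/N}$ all hold simultaneously. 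The probability of $\mathcal{E}$ is then at least the expression displayed in the Corollary once the various exponential tails are combined and lower-order terms absorbed into the universal constants (and noting that when $r_1=0$ the first summand of the target bound is vacuous, so only the $\tilde d$-exponential survives).

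Next I would verify that the hypotheses of Theorem \ref{thm-stat} hold on $\mathcal{E}$. The curvature requirement $\min\{\alpha_1,2\alpha_2\}>\mu$ becomes, with the above constants, exactly $\tfrac14\lambda_{\min}(\Sigma_x)>\mu$, which is the standing assumption of the Corollary (this also gives $\alpha_1>\mu$ as needed in Lemma \ref{lem-cone}); feasibility $\Theta^*\in\Omega$ is assumed; and the sample-size condition $N\ge 4c_0\tau_{\text{add}}\tfrac{\tilde d\log\tilde d}{N}$ required in Proposition \ref{prop-add}(ii) is assumed as well. It remains to check the regularization-parameter condition $\lambda\ge 2\max\{\normm{\nabla\loss_N(\Theta^*)}_{\text{op}},4\omega\tau_1\}$ of Theorem \ref{thm-stat}: on $\mathcal{E}$ this is implied by the Corollary's choice $\lambda\ge c_0\max\{\varphi_{\text{add}}\sqrt{\log\tilde d/N},\ \omega\tau_{\text{add}}\tfrac{\tilde d\log\tilde d}{N}\}$, provided the universal constant $c_0$ is taken large enough to dominate the $2c_3$ and $8c_0$ coming from Proposition \ref{prop-add}(i) and (iii).

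With all hypotheses in force, Theorem \ref{thm-stat} gives, on $\mathcal{E}$,
\begin{equation*}
\normm{\hat\Theta-\Theta^*}_{\text{F}}\le \frac{\sqrt{r_1}}{2\alpha_2-\mu}\normm{\Pi_{\Aa_{J_1}}(\nabla\loss_N(\Theta^*))}_{\text{op}}+\frac{5\sqrt{r_2}}{2(2\alpha_2-\mu)}\lambda .
\end{equation*}
Into the first term I would substitute the projected-gradient bound from Proposition \ref{prop-add}(iii), producing a contribution of order $\varphi_{\text{add}}\sqrt{r_1\log r_1/N}\big/(2\alpha_2-\mu)$. For the second term I would take $\lambda$ at a constant multiple of the threshold in the Corollary's hypothesis; using $\sqrt{\log\tilde d/N}\le\sqrt{\tilde d\log\tilde d/N}$ together with the standing conditions on $N$ and $\omega$, one has $\lambda\lesssim\varphi_{\text{add}}\sqrt{\tilde d\log\tilde d/N}$, so this term has order $\varphi_{\text{add}}\sqrt{r_2\tilde d\log\tilde d/N}\big/(2\alpha_2-\mu)$. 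Summing the two contributions and renaming constants yields exactly \eqref{eq-l2bound}.

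I do not expect a genuine conceptual obstacle, since Theorem \ref{thm-stat} carries the analytic weight; the work is bookkeeping. The most delicate point is reconciling the single lower bound imposed on $\lambda$ with the precise shape of the final error term: one must argue that, under the assumed sample-size lower bound and for the chosen feasibility radius $\omega$, the ``deterministic'' piece $\omega\tau_{\text{add}}\tfrac{\tilde d\log\tilde d}{N}$ of the threshold does not exceed the ``statistical'' piece $\varphi_{\text{add}}\sqrt{\tilde d\log\tilde d/N}$, so that $\lambda$ may legitimately be taken of the latter order when plugging into the bound. The secondary (routine) chore is the union bound itself: one must keep track of how the events of Proposition \ref{prop-add}(i)--(iii) combine and simplify the resulting sum of exponential tails into the stated probability, using $r_1\ge1$ whenever the $r_1$-dependent event is present.
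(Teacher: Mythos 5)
Your proposal is correct and follows exactly the route the paper intends: the paper omits this proof, stating only that it "follows by applying Proposition \ref{prop-add} on Theorem \ref{thm-stat} with some elementary probability theory," which is precisely your specialization of the theorem's hypotheses ($\min\{\alpha_1,2\alpha_2\}=\tfrac14\lambda_{\min}(\Sigma_x)>\mu$, the $\lambda$-threshold, the union bound over the events of Proposition \ref{prop-add}) followed by substitution of the gradient bounds. Your flagged delicate point about $\lambda$ being taken of order $\varphi_{\text{add}}\sqrt{\tilde d\log\tilde d/N}$ is the right thing to watch, and your handling of it is consistent with how the paper uses the bound.
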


\begin{Corollary}\label{corol-mis}
Let $r, \omega$ be positive numbers such that $\Theta^*\in \Omega$ and satisfies \eqref{eq-rank}. Let $\tilde{\Theta}$ be a stationary point of the optimization problem \eqref{eq-esti} with $\loss_N$ given by \eqref{eq-surloss} and $(\hat{\Gamma}_{\emph{mis}},\hat{\Upsilon}_{\emph{mis}})$ in place of $(\hat{\Gamma},\hat{\Upsilon})$. Suppose that the nonconvex regularizer $\regu_\lambda$ satisfies Assumption \ref{asup-regu}, and that $\frac{1}{4}\lambda_{\emph{min}}(\Sigma_x)>\mu$. Then there exist universal positive constants $c_i\ (i=0,1,2,3,4,5)$ such that, if $(\lambda,\omega)$ are chosen to satisfy
\begin{equation}\label{eq-lambda-sta}
\lambda\geq c_0\max\left\{\varphi_{\emph{mis}}\sqrt{\frac{\log \tilde{d}}{N}},\omega\tau_{\emph{mis}}\frac{\tilde{d}\log \tilde{d}}{N}\right\},
\end{equation}
and the sample size satisfies $N\geq 4c_0\tau_{\emph{mis}}\frac{\tilde{d}\log \tilde{d}}{N}$,
then then it holds with probability at least $1-c_1\exp\left(-c_2N\min\left\{(1-\rho)^4\frac{\lambda^2_{\emph{min}}(\Sigma_x)}{\normm{\Sigma_x}_\emph{op}^4},1\right\}\right)-c_3\exp(-c_4\tilde{d})$ that 
\begin{equation}\label{eq-l2bound}
\normm{\hat{\Theta}-\Theta^*}_\emph{F}\leq \frac{c_5\varphi_{\emph{mis}}}{2\alpha_2-\mu}\left(\sqrt{\frac{r_1\log r_1}{N}}+\sqrt{\frac{r_2\tilde{d}\log \tilde{d}}{N}}\right).
\end{equation}
\end{Corollary}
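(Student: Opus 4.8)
The plan is to obtain Corollary \ref{corol-mis} directly from the deterministic recovery bound of Theorem \ref{thm-stat}, feeding in the high-probability verification of the regularity conditions provided by Proposition \ref{prop-mis}; apart from a union bound and some constant-chasing there is nothing new to prove. First I would identify the event on which Theorem \ref{thm-stat} is applicable. The regularizer $\regu_\lambda$ satisfies Assumption \ref{asup-regu} by hypothesis, so it remains to supply the LSC and RSC conditions for the surrogate loss \eqref{eq-surloss}. By Proposition \ref{prop-mis}(i)--(ii), and using the stated sample-size bound $N\geq 4c_0\tau_{\text{mis}}\tilde d\log\tilde d/N$, there is an event $\mathcal{E}_1$ of probability at least $1-c_1\exp(-c_2N\min\{(1-\rho)^4\lambda_{\min}^2(\Sigma_x)/\normm{\Sigma_x}_{\text{op}}^4,1\})$ on which the LSC condition \eqref{eq-rsc1} holds with $\alpha_1=\tfrac12\lambda_{\min}(\Sigma_x)$, $\tau_1=c_0\tau_{\text{mis}}\tilde d\log\tilde d/N$, and the RSC condition \eqref{eq-rsc2} holds with $\alpha_2=\tfrac18\lambda_{\min}(\Sigma_x)$. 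Since $\min\{\alpha_1,2\alpha_2\}=2\alpha_2=\tfrac14\lambda_{\min}(\Sigma_x)$, the standing hypothesis $\tfrac14\lambda_{\min}(\Sigma_x)>\mu$ is exactly the curvature requirement $\min\{\alpha_1,2\alpha_2\}>\mu$ of the theorem.

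Next I would check the admissibility of the regularization parameter. By Proposition \ref{prop-mis}(iii), on an event $\mathcal{E}_2$ with $\Pro(\mathcal{E}_2)\geq 1-c_4\exp(-c_5\tilde d)$ one has $\normm{\nabla\loss_N(\Theta^*)}_{\text{op}}\leq c_3\varphi_{\text{mis}}\sqrt{\log\tilde d/N}$, and moreover $4\omega\tau_1=4c_0\omega\tau_{\text{mis}}\tilde d\log\tilde d/N$. Hence, if the universal constant in the hypothesis $\lambda\geq c_0\max\{\varphi_{\text{mis}}\sqrt{\log\tilde d/N},\,\omega\tau_{\text{mis}}\tilde d\log\tilde d/N\}$ is taken large enough relative to $c_3$ and the constant hidden in $\tau_1$, then on $\mathcal{E}_1\cap\mathcal{E}_2$ we get $\lambda\geq 2\max\{\normm{\nabla\loss_N(\Theta^*)}_{\text{op}},4\omega\tau_1\}$, which is the parameter condition imposed in Theorem \ref{thm-stat}.

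I would then invoke Theorem \ref{thm-stat} on $\mathcal{E}_1\cap\mathcal{E}_2$, obtaining
\[ \normm{\hat\Theta-\Theta^*}_\text{F}\leq\frac{\sqrt{r_1}}{2\alpha_2-\mu}\normm{\Pi_{\Aa_{J_1}}(\nabla\loss_N(\Theta^*))}_{\text{op}}+\frac{5\sqrt{r_2}}{2(2\alpha_2-\mu)}\lambda, \]
and control the two terms separately. For the first, I would use the second estimate of Proposition \ref{prop-mis}(iii): on an event $\mathcal{E}_3$ with $\Pro(\mathcal{E}_3)\geq 1-c_4\exp(-c_5r_1)$, $\normm{\Pi_{\Aa_{J_1}}(\nabla\loss_N(\Theta^*))}_{\text{op}}\leq c_3\varphi_{\text{mis}}\sqrt{\log r_1/N}$, which yields a term of order $\varphi_{\text{mis}}\sqrt{r_1\log r_1/N}/(2\alpha_2-\mu)$. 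For the second, I would take $\lambda$ at its prescribed lower bound and note that the sample-size condition forces $\omega\tau_{\text{mis}}\tilde d\log\tilde d/N\lesssim\varphi_{\text{mis}}\sqrt{\tilde d\log\tilde d/N}$, so that $\lambda\lesssim\varphi_{\text{mis}}\sqrt{\tilde d\log\tilde d/N}$ and the second term is of order $\varphi_{\text{mis}}\sqrt{r_2\tilde d\log\tilde d/N}/(2\alpha_2-\mu)$. Adding the two bounds and taking a union bound over $\mathcal{E}_1,\mathcal{E}_2,\mathcal{E}_3$ --- absorbing $c_4\exp(-c_5r_1)$ into $c_3\exp(-c_4\tilde d)$ since $r_1\leq r\leq\tilde d$ --- gives the claimed inequality with the stated probability.

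The only step I expect to require care is organizational rather than analytic: coherently propagating the universal constants through this chain so that a single $c_0$ in the statement absorbs the distinct constants produced by parts (i) and (iii) of Proposition \ref{prop-mis}, and using the sample-size condition to upgrade the $\sqrt{\log\tilde d/N}$-scale of $\lambda$ to the $\sqrt{\tilde d\log\tilde d/N}$-scale that appears next to $\sqrt{r_2}$ in the final display. All the genuine probabilistic work lives inside Proposition \ref{prop-mis} --- which in turn reduces, exactly as for Proposition \ref{prop-add}, to the sub-Gaussian concentration estimates of \cite[Lemmas 3 and 4]{loh2012high}.
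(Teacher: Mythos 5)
Your proposal is correct and follows exactly the route the paper intends: the authors omit this proof, stating only that it follows by applying Proposition \ref{prop-mis} to Theorem \ref{thm-stat} with elementary probability theory, which is precisely the chain of events, parameter checks, and union bound you lay out. Your additional care in upgrading the $\sqrt{\log\tilde d/N}$ scale of $\lambda$ to the $\sqrt{\tilde d\log\tilde d/N}$ scale appearing next to $\sqrt{r_2}$ fills in the one detail the paper leaves implicit.
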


\begin{Remark}\label{rmk-corol}
\cite{negahban2011estimation} and \cite{Li2024LowrankME} have respectively studied the problem of low-rank matrix recovery in both clean and noisy covariate case based on the nuclear norm regularized method. The recovery bound there scales as 
\begin{equation}\label{eq-wain}
\normm{\hat{\Theta}-\Theta^*}_\emph{F}=O\left(\sqrt{\frac{r\tilde{d}}{N}}\right).
\end{equation} 
When $r_1>0$, meaning that there are $r_1$ singular values larger than $\nu$, the recovery bounds in Corollaries \ref{corol-add} and \ref{corol-mis} are both tighter than \eqref{eq-wain} as long as the sample size $N=\Omega(\tilde{d}\log \tilde{d})$. When $r_1=r$, meaning that all the singular values are larger than $\nu$, the nonconvex estimator achieves the best-case recovery bound scaling as $\normm{\hat{\Theta}-\Theta^*}_\emph{F}=O\left(\sqrt{\frac{r\log r}{N}}\right)$. 

\end{Remark}

\section{Conclusion}\label{sec-con}

In this work, we considered a nonconvex regularized method for high-dimensional low-rank matrix recovery. Under suitable regularity conditions on the nonconvex loss function and regularizer, we provided the recovery bound for any stationary point of the nonconvex method. Furthermore, when some of the singular values of the parameter matrix are larger than a threshold given by the nonconvex regularizer, the established recovery bound is much tighter than that of the convex nuclear norm regularized method. In addition, the theoretical result was applied on errors-in-variables matrix regression to obtain probabilistic consequences via verifying the required conditions.


\bibliographystyle{plain}  
\bibliography{./exactEIV-ref}

\begin{thebibliography}{10}

\bibitem{agarwal2012fast}
A.~Agarwal, S.~Negahban, and M.~J. Wainwright.
\newblock Fast global convergence of gradient methods for high-dimensional
  statistical recovery.
\newblock {\em Ann. Stat.}, 40(5):2452--2482, 2012.

\bibitem{alquier2013rank}
P.~Alquier, C.~Butucea, M.~Hebiri, K.~Meziani, and T.~Morimae.
\newblock Rank-penalized estimation of a quantum system.
\newblock {\em Phys. Rev. A}, 88(3):032113, 2013.

\bibitem{Becker2011Templates}
S.~Becker, E.~J. Cand{\`e}s, and M.~C. Grant.
\newblock Templates for convex cone problems with applications to sparse signal
  recovery.
\newblock {\em Math. Program. Comput.}, 3:165--218, 2011.

\bibitem{breheny2011coordinate}
P.~Breheny and J.~Huang.
\newblock Coordinate descent algorithms for nonconvex penalized regression,
  with applications to biological feature selection.
\newblock {\em Ann. Appl. Stat.}, 5(1):232--253, 2011.

\bibitem{cai2010singular}
J.-F. Cai, E.~J. Cand{\`e}s, and Z.~W. Shen.
\newblock A singular value thresholding algorithm for matrix completion.
\newblock {\em SIAM J. Optim.}, 20(4):1956--1982, 2010.

\bibitem{candes2011tight}
E.~J. Cand{\`e}s and Y.~Plan.
\newblock Tight oracle inequalities for low-rank matrix recovery from a minimal
  number of noisy random measurements.
\newblock {\em IEEE Trans. Inf. Theory}, 57(4):2342--2359, 2011.

\bibitem{candes2010power}
E.~J. Cand{\`e}s and T.~Tao.
\newblock The power of convex relaxation: {Near-optimal} matrix completion.
\newblock {\em IEEE Trans. Inf. Theory}, 56(5):2053--2080, 2010.

\bibitem{carroll2006measurement}
R.~J. Carroll, D.~Ruppert, C.~M. Crainiceanu, and L.~A. Stefanski.
\newblock {\em Measurement error in nonlinear models: {A} modern perspective}.
\newblock CRC Press, Boca Raton, 2006.

\bibitem{datta2017cocolasso}
A.~Datta and H.~Zou.
\newblock {CoCoLasso} for high-dimensional error-in-variables regression.
\newblock {\em Ann. Stat.}, 45(6):2400--2426, 2017.

\bibitem{fan2001variable}
J.~Q. Fan and R.~Z. Li.
\newblock Variable selection via nonconcave penalized likelihood and its oracle
  properties.
\newblock {\em J. Am. Stat. Assoc.}, 96(456):1348--1360, 2001.

\bibitem{Fazel2001ARM}
M.~Fazel, H.~A. Hindi, and S.~P. Boyd.
\newblock A rank minimization heuristic with application to minimum order
  system approximation.
\newblock {\em Proceedings of the 2001 American Control Conference. (Cat.
  No.01CH37148)}, 6:4734--4739, 2001.

\bibitem{gui2015towards}
H.~Gui, J.~W. Han, and Q.~Q. Gu.
\newblock Towards faster rates and oracle property for low-rank matrix
  estimation.
\newblock In {\em Proc. ICML}, pages 2300--2309, 2015.

\bibitem{Hu2013Fast}
Y.~Hu, D.~B. Zhang, J.~P. Ye, and X.~L. Li.
\newblock Fast and accurate matrix completion via truncated nuclear norm
  regularization.
\newblock {\em IEEE Trans. Pattern Anal. Machine Intell.}, (9):35, 2013.

\bibitem{Lee2010ADMiRA}
K.~Lee and Y.~Bresler.
\newblock {ADMiRA}: {Atomic} decomposition for minimum rank approximation.
\newblock {\em IEEE Trans. Inf. Theory}, 56:4402--4416, 2010.

\bibitem{li2023Lowrank}
X.~Li and D.~Y. Wu.
\newblock Low-rank matrix estimation via nonconvex optimization methods in
  multi-response errors-in-variables regression.
\newblock {\em J. Global Optim.}, 2023.

\bibitem{Li2024LowrankME}
X.~Li and D.~Y. Wu.
\newblock Low-rank matrix estimation via nonconvex optimization methods in
  multi-response errors-in-variables regression.
\newblock {\em J. Glob. Optim.}, 88:79--114, 2024.

\bibitem{li2020sparse}
X.~Li, D.~Y. Wu, C.~Li, J.~H. Wang, and J.-C. Yao.
\newblock Sparse recovery via nonconvex regularized {M}-estimators over
  $\ell_q$-balls.
\newblock {\em Comput. Stat. Data Anal.}, 152:107047, 2020.

\bibitem{loh2013local}
P.-L. Loh.
\newblock Local optima of nonconvex regularized {M}-estimators.
\newblock {Dept. Elect. Eng. Comput. Sci.}, UC Berkeley, Berkeley, 2013.

\bibitem{loh2012high}
P.-L. Loh and M.~J. Wainwright.
\newblock High-dimensional regression with noisy and missing data: Provable
  guarantees with nonconvexity.
\newblock {\em Ann. Stat.}, 40(3):1637--1664, 2012.

\bibitem{loh2012supplementaryMH}
P.-L. Loh and M.~J. Wainwright.
\newblock Supplementary material: {High}-dimensional regression with noisy and
  missing data: {Provable} guarantees with nonconvexity.
\newblock {\em Ann. Stat.}, 2012.

\bibitem{Lu2014GeneralizedNN}
C.~Y. Lu, J.~H. Tang, S.~C. Yan, and Z.~C. Lin.
\newblock Generalized nonconvex nonsmooth low-rank minimization.
\newblock In {\em Proc. IEEE CVPR}, pages 4130--4137, 2014.

\bibitem{Mohan2012Iterative}
K.~Mohan and M.~Fazel.
\newblock Iterative reweighted algorithms for matrix rank minimization.
\newblock {\em J. Mach. Learn. Res.}, 13(1):3441--3473, 2012.

\bibitem{natarajan1995sparse}
B.~K. Natarajan.
\newblock Sparse approximate solutions to linear systems.
\newblock {\em SIAM J. Comput.}, 24(2):227--234, 1995.

\bibitem{negahban2012unified}
S.~Negahban, P.~Ravikumar, M.~J. Wainwright, and B.~Yu.
\newblock A unified framework for high-dimensional analysis of {M}-estimators
  with decomposable regularizers.
\newblock {\em Stat. Sci.}, 27(4):538--557, 2012.

\bibitem{negahban2011estimation}
S.~Negahban and M.~J. Wainwright.
\newblock Estimation of (near) low-rank matrices with noise and
  high-dimensional scaling.
\newblock {\em Ann. Stat.}, 39(2):1069--1097, 2011.

\bibitem{nesterov2013introductory}
Y.~Nesterov.
\newblock {\em Introductory lectures on convex optimization: {A} basic course},
  volume~87, chapter~2, pages 56,61.
\newblock Springer Science \& Business Media, Berlin, 2013.

\bibitem{recht2010guaranteed}
B.~Recht, M.~Fazel, and P.~A. Parrilo.
\newblock Guaranteed minimum-rank solutions of linear matrix equations via
  nuclear norm minimization.
\newblock {\em SIAM Rev.}, 52(3):471--501, 2010.

\bibitem{rohde2011estimation}
A.~Rohde and A.~B. Tsybakov.
\newblock Estimation of high-dimensional low-rank matrices.
\newblock {\em Ann. Stat.}, 39(2):887--930, 2011.

\bibitem{rosenbaum2010sparse}
M.~Rosenbaum and A.~B. Tsybakov.
\newblock Sparse recovery under matrix uncertainty.
\newblock {\em Ann. Stat.}, 38(5):2620--2651, 2010.

\bibitem{Rotfeld1967RemarksOT}
S.~Yu. Rotfel'd.
\newblock Remarks on the singular numbers of a sum of completely continuous
  operators.
\newblock {\em Functional Analysis and Its Applications}, 1:252--253, 1967.

\bibitem{sagan2021lowrank}
A.~Sagan and J.~E. Mitchell.
\newblock Low-rank factorization for rank minimization with nonconvex
  regularizers.
\newblock {\em Comput. Optim. Appl.}, 79:273--300, 2021.

\bibitem{sorensen2015measurement}
{\O}.~S{\o}rensen, A.~Frigessi, and M.~Thoresen.
\newblock Measurement error in {LASSO}: {Impact} and likelihood bias
  correction.
\newblock {\em Stat. Sinica}, pages 809--829, 2015.

\bibitem{wainwright2014structured}
M.~J. Wainwright.
\newblock Structured regularizers for high-dimensional problems: {Statistical}
  and computational issues.
\newblock {\em Annu. Rev. Stat. Appl.}, 1:233--253, 2014.

\bibitem{wang2021nonconvex}
H.~Wang, F.~Zhang, Q.~Wu, Y.~H. Hu, and Y.~M. Shi.
\newblock Nonconvex and nonsmooth sparse optimization via adaptively iterative
  reweighted methods.
\newblock {\em J. Global Optim.}, 81:717--748, 2021.

\bibitem{wang2014optimal}
Z.~R. Wang, H.~Liu, and T.~Zhang.
\newblock Optimal computational and statistical rates of convergence for sparse
  nonconvex learning problems.
\newblock {\em Ann. Stat.}, 42(6):2164, 2014.

\bibitem{wu2020scalable}
J.~Wu, Z.~M. Zheng, Y.~Li, and Y.~Zhang.
\newblock Scalable interpretable learning for multi-response error-in-variables
  regression.
\newblock {\em J. Multivar. Anal.}, page 104644, 2020.

\bibitem{wu2019joint}
X.~Wu, X.~Zhang, N.~Wang, and Y.~Cen.
\newblock Joint sparse and low-rank multi-task learning with extended
  multi-attribute profile for hyperspectral target detection.
\newblock {\em Remote. Sens.}, 11:150, 2019.

\bibitem{yao2017large}
Q.~M. Yao, J.~T. Kwok, T.~F. Wang, and T.-Y. Liu.
\newblock Large-scale low-rank matrix learning with nonconvex regularizers.
\newblock {\em {IEEE} Trans. Pattern Anal. Machine Intell.}, 41:2628--2643,
  2017.

\bibitem{yao2015fast}
Q.~M. Yao, J.~T. Kwok, and L.~W. Zhong.
\newblock Fast low-rank matrix learning with nonconvex regularization.
\newblock In {\em Proc. {IEEE} ICDM}, pages 539--548, 2015.

\bibitem{Yue2016API}
M.-C. Yue and M.-C.~S. Anthony.
\newblock A perturbation inequality for concave functions of singular values
  and its applications in low-rank matrix recovery.
\newblock {\em Appl Comput. Harmon. A.}, 40:396--416, 2016.

\bibitem{Zhang2010}
C.-H. Zhang.
\newblock Nearly unbiased variable selection under minimax concave penalty.
\newblock {\em Ann. Stat.}, 38(2):894--942, 2010.

\bibitem{zhang2008sparsity}
C.-H. Zhang and J.~Huang.
\newblock The sparsity and bias of the {Lasso} selection in high-dimensional
  linear regression.
\newblock {\em Ann. Stat.}, 36(4):1567--1594, 2008.

\bibitem{zhou2014regularized}
H.~Zhou and L.~X. Li.
\newblock Regularized matrix regression.
\newblock {\em J. Royal Stat. Soc. B}, 76(2):463--483, 2014.

\end{thebibliography}

\end{document}